\documentclass[12pt]{amsart}


\usepackage[top=1in, bottom=1in, left=1in, right=1in]{geometry}
\usepackage{times}

\usepackage{amssymb,amsmath,amsthm}
\usepackage{enumitem}

\makeatletter
\renewcommand\subsection{\@startsection{subsection}{2}%
	\z@{1\linespacing\@plus1\linespacing}{\linespacing}%
	{\normalfont\bfseries\centering}}
\makeatother

\setlist[1]{itemsep=0.5em, topsep=0.5em}

\usepackage{color}
\definecolor{red}{rgb}{1,0,0}
\definecolor{orange}{rgb}{0.7,0.3,0}
\definecolor{blue}{rgb}{0,.3,.7}
\definecolor{green}{rgb}{0,.6,.4}
\usepackage[colorlinks=true, linkcolor=blue, citecolor=green, urlcolor=purple]{hyperref}   

\renewcommand{\le}{\leqslant}

\renewcommand{\ge}{\geqslant}



\numberwithin{equation}{section}





\theoremstyle{plain}
\newtheorem{thm}{Theorem}[section]
\newtheorem{cor}[thm]{Corollary}
\newtheorem{lem}[thm]{Lemma}
\newtheorem{prop}[thm]{Proposition}

\newtheorem*{DSC}{The Duffin--Schaeffer conjecture}
\newtheorem*{catlin-conj}{Catlin's conjecture}
\newtheorem*{model problem}{Model Problem}

\theoremstyle{definition}

\newtheorem*{rem*}{Remark}

\newtheorem*{notation}{Notation}



\newcommand{\N}{\mathbb{N}}
\newcommand{\Z}{\mathbb{Z}}
\newcommand{\Q}{\mathbb{Q}}
\newcommand{\R}{\mathbb{R}}

\newcommand{\E}{\mathbb{E}}
\renewcommand{\P}{\mathbb{P}}

\newcommand{\CA}{\mathcal{A}}
\newcommand{\CB}{\mathcal{B}}

\newcommand{\CD}{\mathcal{D}}
\newcommand{\CE}{\mathcal{E}}
\newcommand{\CF}{\mathcal{F}}

\newcommand{\CL}{\mathcal{L}}

\newcommand{\CP}{\mathcal{P}}

\newcommand{\CR}{\mathcal{R}}
\newcommand{\CS}{\mathcal{S}}

\newcommand{\CV}{\mathcal{V}}
\newcommand{\CW}{\mathcal{W}}


\newcommand{\eq}[1]{ \begin{equation}\begin{split}
		#1 \end{split}
\end{equation} }
\newcommand{\al}[1]{\begin{align} #1 \end{align} }
\newcommand{\als}[1]{\begin{align*} #1 \end{align*} }

\newcommand{\nn}{\nonumber \\}
\newcommand{\ds}{\displaystyle}

\newcommand{\supp}{\operatorname{supp}}
\newcommand{\lcm}{\operatorname{lcm}}
\newcommand{\m}{\operatorname{meas}}

\newcommand{\fl}[1]{\left\lfloor#1\right\rfloor}

\newcommand{\eps}{\varepsilon}
\renewcommand{\phi}{\varphi}

\newcommand{\bs}\boldsymbol{}

\begin{document}

\title{Rational approximations of irrational numbers}

\author{Dimitris Koukoulopoulos}
\address{D\'epartement de math\'ematiques et de statistique\\
	Universit\'e de Montr\'eal\\
	CP 6128 succ. Centre-Ville\\
	Montr\'eal, QC H3C 3J7\\
	Canada}
\email{dimitris.koukoulopoulos@umontreal.ca}

\date{\today}

\begin{abstract}
Given quantities $\Delta_1,\Delta_2,\dots\geqslant 0$, a fundamental problem in Diophantine approximation is to understand which irrational numbers $x$ have infinitely many reduced rational approximations $a/q$ such that $|x-a/q|<\Delta_q$. Depending on the choice of $\Delta_q$ and of $x$, this question may be very hard. However, Duffin and Schaeffer conjectured in 1941 that if we assume a ``metric'' point of view, the question is governed by a simple zero--one law: writing $\varphi$ for Euler's totient function, we either have $\sum_{q=1}^\infty \varphi(q)\Delta_q=\infty$ and then almost all irrational numbers (in the Lebesgue sense) are approximable, or $\sum_{q=1}^\infty\varphi(q)\Delta_q<\infty$ and almost no irrationals are approximable. We present the history of the Duffin--Schaeffer conjecture and the main ideas behind the recent work of Koukoulopoulos--Maynard that settled it.
\end{abstract}

\maketitle

\section{Diophantine approximation}\label{sec:DA}

Let $x$ be an irrational number. In many settings, practical and theoretical, it is important to find fractions $a/q$ of small numerator and denominator that approximate it well. This fundamental question lies in the core of the field of \emph{Diophantine approximation}.

\subsection{First principles}\label{sec:dirichlet}

 The ``high-school way'' of approximating $x$ is to use its decimal expansion. This approach produces fractions $a/10^n$ such that $|x-a/10^n|\approx 10^{-n}$ typically. However, the error can be made much smaller if we allow more general denominators \cite[Theorem 2.1]{harman}.

\begin{thm}\label{thm:Dirichlet}
If $x\in\R\setminus\Q$, then $|x-a/q|<q^{-2}$ for infinitely many pairs $(a,q)\in\Z\times\N$.
\end{thm}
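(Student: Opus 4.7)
The plan is to prove Theorem \ref{thm:Dirichlet} by the classical pigeonhole argument of Dirichlet, which produces, for every integer parameter $Q\geq 1$, a pair $(a,q)\in\Z\times\N$ with $1\leq q\leq Q$ satisfying the strictly stronger inequality
\[
\bigl|x-a/q\bigr|<\frac{1}{qQ}.
\]
Once this finitary statement is in hand, infinitude will follow from the irrationality of $x$.

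To carry out the pigeonhole step, I would consider the $Q+1$ fractional parts $\{kx\}$ for $k=0,1,\dots,Q$, all of which lie in $[0,1)$. Partitioning $[0,1)$ into the $Q$ half-open intervals $[j/Q,(j+1)/Q)$ for $j=0,1,\dots,Q-1$, the pigeonhole principle forces two indices $0\leq k_1<k_2\leq Q$ whose fractional parts land in the same sub-interval. Setting $q=k_2-k_1\in\{1,\dots,Q\}$ and taking $a=\fls{k_2x}-\fls{k_1x}$, the difference $|qx-a|$ is the distance between these two fractional parts, hence strictly less than $1/Q$. Dividing by $q$ yields $|x-a/q|<1/(qQ)\leq 1/q^2$.

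To upgrade finitely many solutions to infinitely many, I would argue by contradiction: suppose the set of reduced approximations $(a,q)$ satisfying $|x-a/q|<1/q^2$ were finite, say $(a_1,q_1),\dots,(a_n,q_n)$. Because $x$ is irrational, each quantity $|x-a_i/q_i|$ is strictly positive, so one can choose an integer $Q$ so large that $1/Q<\min_{1\leq i\leq n}|x-a_i/q_i|$. Applying the finitary statement with this $Q$ produces a pair $(a,q)$ with $|x-a/q|<1/(qQ)\leq 1/Q$, which cannot coincide with any $(a_i,q_i)$, a contradiction.

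The main conceptual step is really just the pigeonhole observation; the only subtlety is the final bookkeeping argument, where irrationality is used in an essential way to ensure that $|x-a_i/q_i|$ never vanishes, so that $Q$ can be chosen large enough to force a genuinely new approximation. No substantial obstacle is anticipated.
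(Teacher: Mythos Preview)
Your proposal is correct and is precisely the pigeonhole argument the paper attributes to Dirichlet; the paper itself does not spell out a proof of Theorem~\ref{thm:Dirichlet} but cites \cite[Theorem~2.1]{harman} and then points to the continued fraction expansion (Theorem~\ref{thm:CF}(a), together with $q_{j+1}\ge q_j$) as a constructive alternative. One cosmetic point: in your contradiction step you need not restrict to \emph{reduced} fractions---simply list all pairs $(a,q)$ and use that $x\notin\Q$ makes each $|x-a_i/q_i|$ positive; the argument goes through verbatim.
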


Dirichlet (c.~1840) gave a short and clever proof of this theorem. However, his argument is non-constructive because it uses the pigeonhole principle. This gap is filled by the theory of continued fractions (which actually precedes Dirichlet's proof).

Given any $x\in\R\setminus\Q$, we may write $x=n_0+r_0\approx n_0$, where $n_0=\fl{x}$ is the integer part of $x$ and $r_0=\{x\}$ is its fractional part. We then let  $n_1=\fl{1/r_0}$ and $r_1=\{1/r_0\}$, so that $x=n_0+1/(n_1+r_1)\approx n_0+1/n_1$.  If we repeat this process $j-1$ more times, we find that
 \eq{\label{eq:CFexpansion}
	x\approx 
		n_0+\cfrac{1}{n_1+\cfrac{1}{ \cdots+\cfrac{1}{n_j} } } 
		\qquad \text{with}\quad n_i=\Big\lfloor\frac{1}{r_{i-1}} \Big\rfloor,\ r_i=\Big\{\frac{1}{r_{i-1}} \Big\} \quad\text{for}\ i=1,\dots,j.
}
If we write this fraction as $a_j/q_j$ in reduced form, then a calculation reveals that
\eq{\label{eq:CF-recursion}
\begin{array}{lll} 
	a_j = n_j a_{j-1}+a_{j-2} \quad (j\ge2),& a_1=n_0n_1+1, &a_0=n_0;\\
	q_j  = n_j q_{j-1}+q_{j-2} \quad (j\ge2),& q_1=n_1,& q_0=1.
\end{array}
}
When $j\to\infty$, the right-hand side of \eqref{eq:CFexpansion}, often denoted by $[n_0;n_1,\dots,n_j]$, converges to $x$. The resulting representation of $x$ is called its \emph{continued fraction expansion}. The quotients $a_j/q_j$ are called the \emph{convergents} of this expansion and they have remarkable properties \cite{Khinchin-book}. We list some of them below, with the first one giving a constructive proof of Theorem \ref{thm:Dirichlet}.

\begin{thm}\label{thm:CF}
Assume the above set-up and notations.
	\begin{enumerate}
	\item For each $j\ge0$, we have $1/(2q_jq_{j+1})\le|x-a_j/q_j|\le 1/(q_jq_{j+1})$.
	\item For each $j\ge0$, we have $|x-a_j/q_j|=\min\{|x-a/q|: 1\le q\le q_j\}$.
	\item If $|x-a/q|<1/(2q^2)$ with $a$ and $q$ coprime, then $a/q=a_j/q_j$ for some $j\ge0$.
\end{enumerate}

\end{thm}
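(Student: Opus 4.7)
All three parts rest on two consequences of the recursion \eqref{eq:CF-recursion}. A one-line induction gives the cross-product identity $a_j q_{j-1} - a_{j-1} q_j = (-1)^{j-1}$ (using the conventions $a_{-1}=1,\ q_{-1}=0$). Applying the recursion one more step with $n_{j+1}$ replaced by the real number $\alpha_{j+1} := 1/r_j \in [n_{j+1}, n_{j+1}+1)$ yields the exact formula $x = (\alpha_{j+1} a_j + a_{j-1})/(\alpha_{j+1} q_j + q_{j-1})$. Combining the two identities gives
\[
x - \frac{a_j}{q_j} = \frac{(-1)^j}{q_j(\alpha_{j+1} q_j + q_{j-1})},
\]
and the sandwich $q_{j+1} \leq \alpha_{j+1} q_j + q_{j-1} < q_{j+1} + q_j \leq 2 q_{j+1}$ yields part (a) immediately.

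For part (b) I would prove the stronger statement $|qx - a| \geq |q_j x - a_j|$ for every pair $(a,q)$ with $1 \leq q \leq q_j$ and $a/q \neq a_j/q_j$. The cross-product identity lets me solve $u a_j + v a_{j-1} = a,\ u q_j + v q_{j-1} = q$ over $\Z$. The bound $q \leq q_j$ forbids $u$ and $v$ from both being nonzero with the same sign, while $a/q \neq a_j/q_j$ rules out $v = 0$. The factor $(-1)^j$ in the formula above shows that $x - a_j/q_j$ and $x - a_{j-1}/q_{j-1}$ lie on opposite sides of $x$, so when I write $qx - a = u(q_j x - a_j) + v(q_{j-1} x - a_{j-1})$ the two summands carry the \emph{same} sign and no cancellation occurs. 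Hence $|qx - a| \geq |q_{j-1} x - a_{j-1}| > |q_j x - a_j|$, the last step following from a short calculation with the formula in (a) (in fact $|q_{j-1} x - a_{j-1}| = \alpha_{j+1} |q_j x - a_j|$). Dividing by $q \leq q_j$ gives (b).

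For part (c), assume $\gcd(a,q) = 1$ and $|x - a/q| < 1/(2q^2)$. Expand $a/q = [b_0; b_1, \ldots, b_n]$ as a finite continued fraction with convergents $p_k/q_k$, so $p_n/q_n = a/q$. Using the ambiguity $[\ldots, b_n] = [\ldots, b_n - 1, 1]$, I fix the parity of $n$ so that $(-1)^n$ matches the sign of $x - a/q$. Defining $\beta$ by $x = (\beta p_n + p_{n-1})/(\beta q_n + q_{n-1})$, the computation of part (a) gives $|x - a/q| = 1/(q(\beta q + q_{n-1}))$, so the hypothesis forces $\beta q + q_{n-1} > 2q$, hence $\beta > 2 - q_{n-1}/q > 1$ (using $q_{n-1} < q_n = q$; the degenerate case $q = 1$, $n = 0$, is handled directly from $|x - a| < 1/2$ by inspection of $n_0, n_1$). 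Therefore $\beta$ itself has a continued fraction $[c_0; c_1, \ldots]$ with $c_0 \geq 1$, and concatenating produces a valid expansion $x = [b_0; b_1, \ldots, b_n, c_0, c_1, \ldots]$. By uniqueness of the continued fraction expansion of an irrational number, this coincides with \eqref{eq:CFexpansion}, and $a/q = p_n/q_n$ is a convergent of $x$.

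The main obstacle is the sign bookkeeping in part (b): one has to verify, in every admissible subcase of $(u,v)$, that the two terms on the right of $u(q_j x - a_j) + v(q_{j-1} x - a_{j-1})$ genuinely share a sign, which depends on both the opposite-sign condition on $(u,v)$ and the alternation of convergents around $x$. A secondary nuisance in part (c) is choosing the parity of the finite continued fraction of $a/q$ compatibly with the sign of $x - a/q$, and handling the boundary case $q = 1$ separately.
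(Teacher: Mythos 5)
The paper itself does not prove Theorem \ref{thm:CF}: it delegates the proof to Khinchin's book \cite{Khinchin-book}, and your argument is essentially the standard one found there --- the exact formula $x=(\alpha_{j+1}a_j+a_{j-1})/(\alpha_{j+1}q_j+q_{j-1})$ combined with $a_jq_{j-1}-a_{j-1}q_j=(-1)^{j-1}$ for part (a), the ``best approximation of the second kind'' argument for part (b), and Legendre's parity trick for part (c). Parts (a) and (c) are correct and complete as outlined (your parenthetical $q_{n-1}<q$ in (c) can fail only in the $q=1$ case, which you treat separately, so nothing is lost).

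One caveat in part (b): your exclusion of the case ``$u,v$ nonzero of the same sign'' uses $q=uq_j+vq_{j-1}\ge q_j+q_{j-1}>q_j$, which requires $q_{j-1}\ge 1$ and hence $j\ge 1$. This is not something you can repair, because for $j=0$ the statement itself fails whenever $\{x\}>1/2$: then $\min\{|x-a/q|:1\le q\le q_0=1\}=1-\{x\}<\{x\}=|x-a_0/q_0|$, and likewise $|x-(n_0+1)|<|q_0x-a_0|$, so $a_0/q_0=n_0$ is not a best approximation of either kind. This is the classical exception (cf.\ Theorem 17 in \cite{Khinchin-book}), so part (b) should be read with $j\ge1$ (or with the extra hypothesis $\{x\}<1/2$ when $j=0$); for $j\ge 1$ your sign bookkeeping is sound, since $q_jx-a_j$ and $q_{j-1}x-a_{j-1}$ do alternate in sign, and after discarding $u=0$ and $v=0$ the constraint $q\le q_j$ forces $uv<0$, so the two terms in $u(q_jx-a_j)+v(q_{j-1}x-a_{j-1})$ indeed reinforce each other.
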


\subsection{Improving Dirichlet's approximation theorem}

It is natural to ask when a qualitative improvement of Theorem \ref{thm:Dirichlet} exists. Inverting this question leads us to the following definition: we say that a real number $x$ is \emph{badly approximable} if there is $c=c(x)>0$ such that $|x-a/q|\ge cq^{-2}$ for all $(a,q)\in\Z\times\N$.

We can characterize approximable numbers in terms of their continued fraction expansion. Indeed, Theorem \ref{thm:CF}(a) and relation \eqref{eq:CF-recursion} imply that $1/4\le n_{j+1}q_j^2|x-a_j/q_j|\le1$. Hence, together with Theorem \ref{thm:CF}(c), this implies that $x$ is badly approximable if, and only if, the sequence $(n_j)_{j=0}^\infty$ is bounded. Famously, Lagrange  proved that the quadratic irrational numbers are in one--to--one correspondence with the continued fractions that are eventually periodic \cite[\S 10]{Khinchin-book}. In particular, all such numbers are badly approximable.

\smallskip

A related concept to badly approximable numbers is the \emph{irrationality measure}. For each $x\in\R$, we define it to be
\[
\mu(x):=\sup\{\nu\ge0: \mbox{$0<|x-a/q|<q^{-\nu}$ for infinitely many pairs $(a,q)\in\Z\times\N$}\} .
\]
Note that $\mu(x)=1$ if $x\in\Q$, whereas $\mu(x)\ge2$ if $x\in\R\setminus\Q$ by Theorem \ref{thm:Dirichlet}. Moreover, $\mu(x)=2$ if $x$ is badly approximable. In particular, $\mu(x)=2$ for all quadratic irrationals $x$. Remarkably, Roth \cite{roth-algebraic} proved that $\mu(x)=2$ for all algebraic irrational numbers $x$.

Determining the irrationality measure of various famous transcendental constants is often very hard. We do know that $\mu(e)=2$, where $e$ denotes Euler's constant. However, determining $\mu(\pi)$ is a famous open problem. Towards it, Zeilberger and Zudilin \cite{ZZ} proved that $\mu(\pi)\le 7.10320533\dots$. It is widely believed that $\mu(\pi)=2$.

\smallskip

Instead of trying to reduce the error term in Dirichlet's approximation theorem, we often require a different type of improvement: restricting the denominators $q$ to lie in some special set $\CS$. The theory of continued fractions is of limited use for such problems, because the denominators it produces satisfy rigid recursive relations (cf.~\eqref{eq:CF-recursion}).

For rational approximation with prime or square denominators, the best results at the moment are due to Matom\"aki \cite{matomaki} and Zaharescu \cite{zaharescu}, respectively.

\begin{thm}[Matom\"aki (2009)] \label{thm:matomaki}
Let $x$ be an irrational number and let $\eps>0$. There are infinitely many integers $a$ and prime numbers $p$ such that $|x-a/p|<p^{-4/3+\eps}$.
\end{thm}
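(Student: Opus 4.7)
The plan is to use the Hardy--Littlewood circle method combined with Vaughan's identity (or Harman's sieve) to count primes $p \in (N/2, N]$ satisfying $\|px\| < N^{-1/3+\varepsilon}$, where $\|\cdot\|$ denotes distance to the nearest integer. The equivalence $|x - a/p| < p^{-4/3+\varepsilon} \iff \|px\| < p^{-1/3+\varepsilon}$ makes this reformulation natural: once we produce at least one such prime in $(N/2, N]$ for every sufficiently large $N$ along some unbounded sequence, the theorem follows.

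Fix a large $N$, set $\delta := N^{-1/3+\varepsilon}$, and choose a smooth $1$-periodic weight $\varphi \geq \mathbf{1}_{[-\delta/2,\delta/2]}$ with Fourier coefficients $\widehat\varphi(k)$ essentially supported on $|k| \leq K := \delta^{-1} N^{\varepsilon/2}$. Writing $S := \sum_{N/2 < p \leq N} \varphi(px)$ and separating the $k=0$ Fourier mode yields
\[
S = \widehat\varphi(0)\bigl(\pi(N) - \pi(N/2)\bigr) + \sum_{0 < |k| \leq K} \widehat\varphi(k) \sum_{N/2 < p \leq N} e(kpx) + O(1).
\]
The main term has order $\delta N/\log N$, which is exactly the expected count, so the task reduces to proving the exponential-sum bound
\[
\sum_{N/2 < p \leq N} e(kpx) \ll N^{1-\eta}
\]
for some $\eta > \varepsilon$, uniformly for $1 \leq |k| \leq K$. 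Invoking Theorem~\ref{thm:Dirichlet} applied to $kx$ with cutoff $N^{2/3}$ furnishes a reduced fraction $a/q$ with $q \leq N^{2/3}$ and $|kx - a/q| \leq 1/(q N^{2/3})$, exposing the Diophantine structure of $kx$ needed to bound the sum.

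To estimate the exponential sum, one applies Vaughan's identity (or the Heath--Brown identity) to decompose $\Lambda(n)$ into ``type~I'' pieces $\sum_{m \leq M} \alpha_m \sum_n e(kmnx)$, handled by summing the inner geometric progression via $\min(N/m, \|kmx\|^{-1})$ against the approximation $a/q$, and ``type~II'' bilinear pieces $\sum_{m \sim M}\sum_{n \sim N/M} \alpha_m \beta_n e(kmnx)$, handled by Cauchy--Schwarz together with the equidistribution of $\{mkx\}$. The main obstacle, and Matom\"aki's key innovation, is squeezing out the sharp exponent $1/3$: a direct implementation of Vinogradov's method only yields $\|px\| \ll p^{-1/5+\varepsilon}$, and classical Vaughan-based arguments stall around $p^{-1/4+\varepsilon}$. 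To close the gap, one embeds the exponential-sum estimates into Harman's sieve framework, which avoids the logarithmic losses intrinsic to Buchstab iteration, and treats separately the delicate case in which $q$ sits near the endpoint $N^{2/3}$: the minor-arc bound then degenerates, and one must exploit secondary Diophantine approximations of $x$ with denominators in complementary ranges, obtained by iterating Theorem~\ref{thm:Dirichlet} or by reading off further convergents of the continued fraction expansion of $x$.
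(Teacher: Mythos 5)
There is a genuine gap, and it sits exactly where the theorem is hard. First, note that the paper does not prove this statement at all: it is quoted as a known deep result of Matom\"aki, with the proof living in her paper. So the only question is whether your outline would actually constitute a proof, and it would not. Your reduction ($|x-a/p|<p^{-4/3+\varepsilon}$ iff $\|px\|<p^{-1/3+\varepsilon}$), the smoothed Fourier expansion of the indicator of $\|px\|<\delta$, and the reduction to bounding $\sum_{N/2<p\le N}e(kpx)$ for $1\le|k|\le K$ are all standard and fine. But you then concede that Vaughan-type Type~I/Type~II analysis ``stalls around $p^{-1/4+\varepsilon}$'' and claim the exponent $1/3$ is recovered by ``embedding into Harman's sieve, which avoids the logarithmic losses of Buchstab iteration'' plus ``secondary Diophantine approximations'' near the endpoint $q\approx N^{2/3}$. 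That is not where the exponent $1/3$ comes from. Harman's sieve by itself (Harman's own work) reaches only exponent $3/10$; the subsequent improvements (Heath--Brown--Jia, and finally Matom\"aki's $1/3$) require a genuinely new input for the bilinear sums, namely estimates for averages of incomplete Kloosterman sums coming from the spectral theory of automorphic forms (Deshouillers--Iwaniec type bounds), combined with Harman's sieve. Nothing in your sketch supplies, or even names, an ingredient of this strength, so the step from $1/4$ (or $3/10$) to $1/3$ is asserted rather than proved; ``Cauchy--Schwarz together with equidistribution of $\{mkx\}$'' cannot deliver it, and iterating Dirichlet/continued fractions does not rescue the endpoint case.

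A secondary structural issue: you apply Dirichlet's theorem to $kx$ separately for each $k\le K$, so the resulting denominators $q=q(k)$ vary with $k$ and you lose uniformity in the minor-arc treatment; the standard set-up in this problem fixes a single convergent $a/q$ of $x$ with $q$ in a range tied to $N$ (splitting into cases according to the size of $q$), and the exceptional ranges of $q$ are exactly where the real work happens. In short, your proposal correctly frames the problem and identifies the barrier, but the mechanism it offers for crossing the barrier is not the one that works, and no substitute is provided, so it is a plan rather than a proof.
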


\begin{thm}[Zaharescu (1995)] \label{thm:zaharescu}
Let $x$ be an irrational number and let $\eps>0$. There are infinitely many pairs $(a,q)\in\Z\times\N$ such that $|x-a/q^2|<q^{-8/3+\eps}$.
\end{thm}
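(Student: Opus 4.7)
The problem is equivalent to producing, for each irrational $x$ and $\eps>0$, infinitely many integers $m$ with $\|m^2 x\| < m^{-2/3+\eps}$, where $\|\cdot\|$ denotes distance to the nearest integer. The plan is to bootstrap from a high-quality rational approximation $a/q$ of $x$ to one with a square denominator. Specifically, applying Theorem~\ref{thm:CF}(a) to a convergent $a_n/q_n$ with next denominator $Q = q_{n+1}$ yields $|qx-a|<1/Q$ and $\gcd(a,q)=1$, so for any $m\in\N$ we have
\[
m^2 x \;=\; \frac{am^2}{q}+\frac{m^2(qx-a)}{q},
\]
and choosing $b\in\Z$ to be the nearest integer to $am^2/q$ gives
\[
\|m^2 x\|\le \|am^2/q\|+\frac{m^2}{qQ}.
\]

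The crux is to find an integer $m$, not too large, for which $\|am^2/q\|$ is small. Setting $M=(qQ)^{1/3}$ balances the two terms: for $m\in[M,2M]$, the error $m^2/(qQ)$ is of order $1/M$, so it suffices to find $m$ in this range with $\|am^2/q\|\ll 1/M$. Such an $m$ yields $\|m^2 x\|\ll 1/m$, hence $|x-b/m^2|\ll m^{-3}$, which beats the target exponent $-8/3+\eps$ by a wide margin. As $n$ runs over convergents of $x$, the quantities $q$ and $Q$ tend to infinity, producing infinitely many distinct pairs $(b,m)$.

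The hard part is guaranteeing the existence of such an $m$. A naive pigeonhole among $\{am^2\bmod q:1\le m\le M\}$ only controls differences, producing $m_1,m_2$ with $a(m_1^2-m_2^2)\bmod q$ small; but $m_1^2-m_2^2=(m_1-m_2)(m_1+m_2)$ is almost never a perfect square. To obtain a genuine square root, one must invoke an equidistribution result for squares modulo $q$, extracted from Weyl/Vinogradov-type estimates on incomplete quadratic exponential sums $\sum_{M\le m\le 2M}e(ram^2/q)$, possibly paired with a sieve or a judicious choice of an alternative convergent or intermediate fraction whose denominator enjoys a favourable multiplicative structure. Pushing such estimates hard enough to break past Heilbronn's exponent $1/2$ (coming from Weyl differencing alone) and reach the target exponent $2/3$ is where Zaharescu's theorem does its real work.
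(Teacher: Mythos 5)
This theorem is not proved in the paper at all: it is quoted with a citation to Zaharescu's \emph{Invent.\ Math.} article, so your argument has to stand on its own, and it does not. The reduction $\|m^2x\|\le \|am^2/q\|+m^2/(qQ)$ with $a/q$ a convergent is fine, but the entire content of the theorem is then packed into your unproved claim that there exists $m\in[M,2M]$, $M=(qQ)^{1/3}$, with $\|am^2/q\|\ll 1/M$; you acknowledge this is ``where Zaharescu's theorem does its real work'' and only gesture at Weyl/Vinogradov estimates ``pushed hard enough,'' which is not a proof. Worse, the quantitative target you set is not merely hard, it is stronger than the open problem recorded in the paper right after Theorems \ref{thm:matomaki} and \ref{thm:zaharescu}: your $m$ would give $|x-b/m^2|\ll m^{-3}$, i.e.\ the exponent $3$ that is explicitly stated to be out of reach. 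Concretely, take $x$ badly approximable (e.g.\ a quadratic irrational), so that $Q\asymp q$ and $M\asymp q^{2/3}$; your claim then asserts that some $m\le q^{2/3}$ has $am^2\bmod q$ within $O(q^{1/3})$ of $0$. Known technology gives nothing close: Heilbronn's bound (Weyl differencing) saves only $M^{-1/2+\eps}$, Zaharescu's own improvement saves $M^{-4/7+\eps}$, and equidistribution of $am^2\bmod q$ via P\'olya--Vinogradov-type input only places a value in an interval of length about $q^{1/2+\eps}$, not $q^{1/3}$. In the opposite regime of huge partial quotients ($Q>q^2$, so $M>q$) your requirement $\|am^2/q\|\ll 1/M<1/q$ even forces the degenerate condition $q\mid m^2$, since $\gcd(a,q)=1$; so the balancing $M=(qQ)^{1/3}$ cannot be the right one in any case.

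So the proposal has a genuine gap rather than an alternative route: the skeleton (pass to $\|m^2x\|$, approximate $x$ by a convergent, study $am^2\bmod q$) is the natural starting point, but obtaining the exponent $8/3$ requires a different balance of parameters and the substantial apparatus of Zaharescu's paper (careful treatment of short ranges of $m$ relative to $q$, exponential-sum and structural arguments depending on the arithmetic of $q$, and intermediate fractions between convergents), none of which is supplied or even correctly targeted here. As written, the argument proves nothing beyond the trivial reduction, and if its key claim were granted it would already settle the stated open problem of replacing $8/3$ by $3$ --- a sign that the claim, not the theorem, is what needs proof.
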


Two important open problems are to show that Theorems \ref{thm:matomaki} and \ref{thm:zaharescu} remain true even if we replace the constants $4/3$ and $8/3$ by $2$ and $3$, respectively.

\section{Metric Diophantine approximation}

Unable to answer simple questions about the rational approximations of specific numbers, a lot of research adopted a more statistical point of view. For example, given $M>2$, what \emph{proportion} of real numbers have irrationality measure $\ge M$? This new perspective gives rise to the theory of \emph{metric Diophantine approximation}, which has a much more analytic and probabilistic flavor than the classical theory of Diophantine approximation. As we will see, the ability to ignore small pathological sets of numbers leads to a much more robust theory that provides simple and satisfactory answers to very general questions.

In order to give precise meaning to the word ``proportion'', we shall endow $\R$ with a measure. Here, we will mainly use the Lebesgue measure (denoted by ``meas'').

\subsection{The theorems of Khinchin and Jarn\'ik-Besicovitch}\label{sec:khinchin}

The foundational result in the field of metric Diophantine approximation was proven by Khinchin in his seminal 1924 paper \cite{Khinchin-paper}. It is a rather general result: given a sequence $\Delta_1,\Delta_2,\dots,\ge0$ of ``permissible margins of error'', we wish to determine for which real numbers $x$ there are infinitely many pairs $(a,q)\in\Z\times\N$ such that $|x-a/q|<\Delta_q$. Clearly, if $x$ has this property, so does $x+1$. Hence, we may focus on studying 
\eq{\label{eq:dfn-A}
\CA:= \big\{x\in[0,1] : \mbox{$|x-a/q|<\Delta_q$ for infinitely many pairs $(a,q)\in\Z\times \N$}\big\}.
}

\begin{thm}[Khinchin (1924)]\label{thm:Khinchin} Let $\Delta_1,\Delta_2,\dots\ge0$ and let $\CA$ be defined as in \eqref{eq:dfn-A}.
	\begin{enumerate}
		\item If $\sum_{q=1}^\infty q\Delta_q<\infty$, then $\m(\CA)=0$.
		\item If $\sum_{q=1}^\infty q\Delta_q=\infty$ and the sequence $(q^2\Delta_q)_{q=1}^\infty$ is decreasing, then $\m(\CA)=1$.
	\end{enumerate}

\end{thm}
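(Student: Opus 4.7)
The plan is to attack the two parts separately via Borel--Cantelli. For each $q\in\N$ set
\[
A_q := \bigcup_{0\le a\le q} \big(a/q-\Delta_q,\,a/q+\Delta_q\big) \cap [0,1],
\]
so that $\CA = \limsup_{q\to\infty}A_q$. For part (a), I would simply bound $\m(A_q)\le 2(q+1)\Delta_q \le 4q\Delta_q$ and apply the first Borel--Cantelli lemma: convergence of $\sum_q q\Delta_q$ forces $\sum_q \m(A_q)<\infty$, and hence $\m(\CA)=0$. This direction is essentially automatic.

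For part (b) I would use the divergence form of Borel--Cantelli (a second-moment estimate): if
\[
\sum_{p,q\le N}\m(A_p\cap A_q) \ll \Big(\sum_{q\le N}\m(A_q)\Big)^2
\]
for infinitely many $N$, then $\m(\limsup A_q)>0$. Since $\m(A_q)\asymp q\Delta_q$ once nearly coincident intervals within a single $A_q$ are discounted (which the hypothesis $q^2\Delta_q$ decreasing makes harmless, as the regime $\Delta_q\ge 1/(2q)$ forces $\m(A_q)=1$ and the problem becomes trivial), the hypothesis guarantees $\sum_{q\le N}q\Delta_q\to\infty$, so the denominator in the Borel--Cantelli formula grows as required. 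Thus the one substantive task is to estimate the cross-terms $\m(A_p\cap A_q)$ from above.

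That upper bound is the main obstacle. For distinct $p,q$, an interval around $a/q$ meets one around $b/p$ precisely when $|ap-bq|<pq(\Delta_p+\Delta_q)$. A lattice-point count shows that the number of admissible pairs $(a,b)$ with $0\le a\le q$ and $0\le b\le p$ is of order $pq(\Delta_p+\Delta_q)/\gcd(p,q)$ plus boundary terms, and each overlap contributes at most $2\min(\Delta_p,\Delta_q)$ to the intersection. Summing over $a,b$ gives the heuristic
\[
\m(A_p\cap A_q) \ll \m(A_p)\m(A_q) + \text{error controlled by }\gcd(p,q).
\]
The delicate case is when $p$ and $q$ share a large common factor, and this is exactly where the monotonicity assumption on $q^2\Delta_q$ does the real work: it prevents $\Delta_q$ from being anomalously large for highly composite $q$, so that the pairs $(p,q)$ with large $\gcd$ contribute only a manageable error and quasi-independence holds on average. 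This step is precisely the part that fails in full generality and forces the Duffin--Schaeffer formulation with $\varphi(q)$ in place of $q$; here the monotonicity allows us to sidestep those difficulties.

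Finally, once the divergence Borel--Cantelli yields $\m(\CA)>0$, I would upgrade this to $\m(\CA)=1$ via a zero--one law. The set $\CA$ is invariant under integer translations and, more strongly, under rational shifts in a way to which a Cassels--Gallagher style zero--one law applies. As a backup, a direct Lebesgue density argument works: running the same second-moment analysis relativized to an arbitrary subinterval $I\subset[0,1]$ yields $\m(\CA\cap I)\gg \m(I)$, and the Lebesgue density theorem then forces $\CA$ to be conull.
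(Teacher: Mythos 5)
Part (a) of your plan is fine and is exactly the paper's own observation (first Borel--Cantelli after $\m(\CA_q)\le 2(q+1)\Delta_q$), and the final upgrade from positive to full measure via a Cassels-type zero--one law (Theorem \ref{thm:cassels}) is legitimate. The paper does not actually prove Khinchin's theorem (it cites it), so the issue is whether your route for part (b) works on its own terms --- and there is a genuine gap at precisely the step you flag as where ``the monotonicity does the real work''.

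The unsupported assertion is that monotonicity of $q^2\Delta_q$ makes the large-$\gcd$ pairs a manageable error. It does not: monotonicity constrains $\Delta_q$ only through the size of $q$, not its arithmetic, and the correct count of admissible pairs $(a,b)$ is $\asymp pq(\Delta_p+\Delta_q)+\gcd(p,q)$ (not divided by the gcd), so $\m(\CA_p\cap\CA_q)\gg \gcd(p,q)\min\{\Delta_p,\Delta_q\}$, because the $\gcd(p,q)$ fractions with denominator $\gcd(p,q)$ are centres of intervals of \emph{both} sets. If anything, monotonicity makes this worse: since $\Delta_{2p}\le\Delta_p$, roughly half of $\CA_{2p}$ sits inside $\CA_p$, so $\m(\CA_p\cap\CA_{2p})\ge\tfrac12\m(\CA_{2p})$, which exceeds $\m(\CA_p)\m(\CA_{2p})$ by a factor $\asymp 1/\m(\CA_p)$. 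Quantitatively, take $\Delta_q=1/(q^2\log q)$, which satisfies both hypotheses of (b) (it is the borderline case of Corollary \ref{cor:Khinchin}). Counting only the pairs $p=jq/e$ with $e\mid q$, $\gcd(j,e)=1$, for which $\gcd(p,q)=q/e$, one finds
\[
\sum_{p<q\le N}\m(\CA_p\cap\CA_q)\ \gg\ \sum_{q\le N} q\Delta_q \sum_{\substack{e\mid q\\ 1<e\le q^{1/3}}}\frac{\phi(e)}{e}\ \gg\ \log N,
\qquad\text{whereas}\qquad
\Big(\sum_{q\le N}\m(\CA_q)\Big)^{2}\asymp(\log\log N)^{2}.
\]
So the second-moment inequality you need fails for all large $N$, and restricting to blocks $[Q,R]$ normalized as in Lemma \ref{lem:CS} and Proposition \ref{prop:DS via CS} does not help, since the same pairs (those with $p/q$ a fraction of small denominator) already make the in-block correlation sum unbounded. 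This is exactly the non-reduced-fraction redundancy exhibited in Proposition \ref{prop:DS-counterexample}: a pairwise quasi-independence argument run on the sets $\CA_q$ themselves cannot prove Khinchin's theorem, with or without monotonicity, so the ``one substantive task'' in your outline is not merely delicate --- as stated it is impossible.

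A correct route needs a different mechanism. Either use the classical continued-fraction proof (cf.\ \cite{Khinchin-book,harman}), or, staying within this paper's framework, pass to reduced fractions: since $q\Delta_q=(q^2\Delta_q)/q$ is non-increasing, partial summation against $\sum_{q\le x}\phi(q)/q\gg x$ gives $\sum_{q\le Q}\phi(q)\Delta_q\gg\sum_{q\le Q}q\Delta_q$, so condition \eqref{eq:DS-condition} holds and $\sum_q\phi(q)\Delta_q=\infty$; Theorem \ref{thm:DS for normal integers} then gives $\m(\CA^*)=1$, hence $\m(\CA)=1$. In other words, the second-moment argument must be run on the reduced sets $\CA_q^*$, where Lemma \ref{lem:PV}-type overlap bounds tame exactly the gcd contributions that destroy it for the sets $\CA_q$.
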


\begin{cor}\label{cor:Khinchin} For almost all $x\in\R$, we have $|x-a/q|\le 1/(q^2\log q)$ for infinitely many $(a,q)\in\Z\times\N$. On the other hand, if $c>1$ is fixed, then for almost every $x\in\R$, the inequality $|x-a/q|\le 1/(q^2\log^cq)$ admits only finitely many solutions $(a,q)\in\Z\times \N$.
\end{cor}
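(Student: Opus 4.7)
The plan is to deduce both statements directly from Khinchin's theorem (Theorem \ref{thm:Khinchin}), applied to the sequences
\[
\Delta_q = \frac{1}{q^2 \log q} \quad (\text{for part (a)}) \qquad \text{and} \qquad \Delta_q = \frac{1}{q^2 \log^c q} \quad (\text{for part (b)}),
\]
with $q \ge 2$ (and $\Delta_1 := 0$, say). The set $\CA$ in \eqref{eq:dfn-A} is then exactly, up to a measure-zero boundary, the set of $x \in [0,1]$ for which the inequality in the corollary has infinitely many solutions.

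For part (a), I would first verify that $q^2 \Delta_q = 1/\log q$ is a decreasing function of $q$ for $q \ge 2$, so the monotonicity hypothesis of Theorem \ref{thm:Khinchin}(b) is satisfied. Then $\sum_{q \ge 2} q \Delta_q = \sum_{q \ge 2} 1/(q \log q) = \infty$ by the standard integral comparison $\int_2^\infty dt/(t\log t) = \infty$. Khinchin's theorem then yields $\m(\CA) = 1$ on $[0,1]$. For part (b), I would simply check that $\sum_{q \ge 2} q \Delta_q = \sum_{q \ge 2} 1/(q \log^c q) < \infty$, again by integral comparison (this is where $c>1$ is used), and apply Theorem \ref{thm:Khinchin}(a) to conclude $\m(\CA) = 0$ on $[0,1]$.

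Two minor matters remain. First, the corollary uses $\le$ rather than the strict $<$ appearing in \eqref{eq:dfn-A}; this is harmless because the boundary set where equality holds for some fixed $(a,q)$ is a countable union of single points, hence Lebesgue-null. (Alternatively, one can absorb the difference by replacing $\Delta_q$ with $2\Delta_q$ or $\Delta_q/2$, which does not affect either convergence or monotonicity.) Second, the statement concerns almost all $x \in \R$, not just $x \in [0,1]$; this follows at once from the translation invariance of Lebesgue measure, since the set of ``approximable'' $x$ is invariant under integer translations, so if it has full (resp.\ zero) measure on $[0,1]$, it has full (resp.\ zero) measure on every interval $[n, n+1]$, hence on $\R$.

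There is no real obstacle here: the corollary is essentially a worked example of Khinchin's theorem. The only thing one must remember to check is the monotonicity of $q^2\Delta_q$ in part (a), because that is a genuine hypothesis of Theorem \ref{thm:Khinchin}(b)—indeed, this is precisely the hypothesis the Duffin--Schaeffer conjecture seeks to remove, so it is worth flagging it as the only nontrivial input.
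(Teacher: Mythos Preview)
Your proposal is correct and is exactly the intended derivation: the paper states the corollary immediately after Khinchin's theorem without proof, so the expected argument is precisely the direct application of Theorem~\ref{thm:Khinchin} with $\Delta_q=1/(q^2\log q)$ and $\Delta_q=1/(q^2\log^c q)$ that you carry out. The only cosmetic wrinkle is that setting $\Delta_1:=0$ makes $(q^2\Delta_q)_{q\ge1}$ fail monotonicity at $q=1$; simply take $\Delta_1\ge 1/\log 2$ instead (which, as you note, does not change $\CA$).
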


In particular, Corollary \ref{cor:Khinchin} implies that the set of badly approximable numbers has null Lebesgue measure. On the other hand, it also says that almost all real numbers have irrationality measure equal to 2. This last result is the main motivation behind the conjecture that $\mu(\pi)=2$: we expect $\pi$ to behave like a ``typical'' real number.

Naturally, the fact that $\CW_M:=\{x\in\R:\mu(x)\ge M\}$ has null Lebesgue measure for $M>2$ raises the question of determining its Hausdorff dimension (denoted by $\dim(\CW_M)$). Jarn\'ik \cite{jarnik} and Besicovitch \cite{besicovitch} answered this question independently of each other.

\begin{thm}[Jarn\'ik (1928), Besicovitch (1934)]\label{thm:jarnik-besicovitch}
	We have $\dim(\CW_M)=2/M$ for all $M\ge2$.
\end{thm}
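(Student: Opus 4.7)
The plan is to prove the two inequalities $\dim(\CW_M)\le 2/M$ and $\dim(\CW_M)\ge 2/M$ separately. Since $\CW_M$ is $\Z$-invariant and Hausdorff dimension is countably stable under unions, it suffices to work with $\CW_M\cap[0,1]$.

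For the upper bound, I would use a direct covering argument. For every $\nu<M$,
$$\CW_M\cap[0,1]\;\subseteq\;\CE_\nu:=\bigl\{x\in[0,1]:|x-a/q|<q^{-\nu}\text{ for infinitely many }(a,q)\in\Z\times\N\bigr\}.$$
For each integer $Q$, the intervals $(a/q-q^{-\nu},a/q+q^{-\nu})$ with $q\ge Q$ and $0\le a\le q$ cover $\CE_\nu$, and for any $s>2/\nu$ the sum $\sum_{q\ge Q}(q+1)(2q^{-\nu})^{s}$ converges and tends to $0$ as $Q\to\infty$. Hence $\CH^{s}(\CE_\nu)=0$, giving $\dim\CW_M\le 2/\nu$ for every $\nu<M$; letting $\nu\to M^{-}$ yields $\dim\CW_M\le 2/M$.

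For the lower bound, I would construct a Cantor-like subset of the smaller set $\{x\in[0,1]:|x-a/q|<q^{-M}\text{ i.o.}\}\subseteq\CW_M$. Fix a large parameter $\alpha>1$ and a rapidly increasing sequence defined by $Q_{n+1}=\ceils{Q_n^{\alpha}}$. Let $K_n$ be the union of all closed intervals $J_{a,q}:=[a/q-q^{-M},a/q+q^{-M}]$ with $\gcd(a,q)=1$, $Q_n<q\le 2Q_n$, and $J_{a,q}$ contained in some component of $K_{n-1}$. Standard equidistribution of Farey fractions shows that each parent interval $I\subseteq K_{n-1}$ of length $\ell_{n-1}\asymp Q_{n-1}^{-M}$ contains $\gg \ell_{n-1}Q_n^{2}$ children, while the spacing bound $|a_1/q_1-a_2/q_2|\ge 1/(q_1q_2)\gg Q_n^{-2}$, which dwarfs the child-width $q^{-M}$ because $M>2$, ensures the children are pairwise well separated. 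Endowing $K=\bigcap_n K_n$ with the natural uniform mass distribution $\mu$ (each child receiving an equal share of its parent's mass) and applying the mass distribution principle, I would estimate $\mu(B(x,r))\ll r^{s}$ for any $s<(2\alpha-M)/(M(\alpha-1))$; letting $\alpha\to\infty$ drives this exponent to $2/M$, and hence $\dim\CW_M\ge 2/M$.

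The main obstacle is the lower bound, and within it the mass-distribution estimate, which requires a careful case analysis across scales $r\in[\ell_n,\ell_{n-1}]$: at the smallest scale one exploits that $B(x,r)$ meets only $O(1)$ children of $K_n$, while at intermediate scales one counts children via Farey-fraction spacing. A conceptually cleaner alternative is to replace the combinatorial count by continued fractions: restrict to $x=[n_0;n_1,n_2,\dots]$ with $q_{j-1}^{M-2}\le n_j<2q_{j-1}^{M-2}$, use Theorem \ref{thm:CF}(a) together with the recursion \eqref{eq:CF-recursion} to verify $|x-a_j/q_j|\asymp q_j^{-M}$ (so $x\in\CW_M$), and compute the dimension of the resulting symbolic Cantor set by the same mass-distribution strategy applied to cylinder sets indexed by $(n_1,\dots,n_j)$. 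Either route reduces the lower bound to controlling how many reduced denominators of a prescribed size fall into a short interval, which is the genuinely arithmetic heart of the argument.
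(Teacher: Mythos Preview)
The paper does not prove Theorem~\ref{thm:jarnik-besicovitch}; it merely states the result and cites Jarn\'ik~\cite{jarnik} and Besicovitch~\cite{besicovitch}. So there is nothing in the paper to compare your argument against.

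Your proposal follows the classical route and is sound as a sketch. The upper bound via the natural cover is entirely standard and correct as written. For the lower bound, your Cantor-set construction with the mass distribution principle is essentially Jarn\'ik's original strategy; the exponent $(2\alpha-M)/(M(\alpha-1))$ is right and does tend to $2/M$ from below as $\alpha\to\infty$. Two small points worth making explicit if you write this out in full: first, the equidistribution step (counting fractions $a/q$ with $Q_n<q\le 2Q_n$ inside a parent interval of length $\asymp Q_{n-1}^{-M}$) requires that this length exceed $1/Q_n$, which forces $\alpha>M$ rather than merely $\alpha>1$ --- harmless since you let $\alpha\to\infty$, but it should be stated; second, the case $M=2$ is degenerate in your construction (the separation bound $Q_n^{-2}$ no longer dominates the child width) and should be disposed of separately via the trivial observation $\CW_2\supseteq\R\setminus\Q$. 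Your continued-fraction alternative is also viable and arguably cleaner for verifying membership in $\CW_M$, though the dimension computation for the resulting cylinder-set Cantor system still needs the same kind of intermediate-scale bookkeeping you flag as the main obstacle.
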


\subsection{Generalizing Khinchin's theorem}

Following the publication of Khinchin's theorem, research focused on weakening the assumption that $q^2\Delta_q\searrow$ in part (b). Importantly, doing so would open the door to understanding rational approximations using only a restricted set of denominators. Indeed, if $q^2\Delta_q\searrow$, then either $\Delta_q>0$ for all $q$, or there is  $q_0$ such that $\Delta_q=0$ for all $q\ge q_0$. The second case is trivial, since it implies $\CA=\emptyset$. So, if we wish to understand Diophantine approximation with a restricted set of denominators $\CS$ (which would require $\Delta_q=0$ for $q\notin\CS$), then we must prove a version of Theorem \ref{thm:Khinchin}(b) without the assumption that $q^2\Delta_q\searrow$.

In order to understand better the forces at play here, it is useful to recast Khinchin's theorem in probabilistic terms. For each $q$, let us define the set
\al{
	\CA_q &:= \big\{x\in[0,1] : \mbox{there is $a\in\Z$ such that $|x-\frac{a}{q}|<\Delta_q$}\big\} \nn
	&= [0,1]\cap \bigcup_{0\le a\le q} \Big(\frac{a}{q}-\Delta_q,\frac{a}{q}+\Delta_q\Big) .\label{eq:dfn-Aq}
}
Then $\CA=\{x\in[0,1]: \mbox{$x\in \CA_q$ infinitely often}\}$, which we often write as $\CA=\limsup_{q\to\infty}\CA_q$. We may thus view $\CA$ as the event that for a number chosen uniformly at random from $[0,1]$, an infinite number of the events $\CA_1,\CA_2,\dots$ occur. A classical result from probability theory due to Borel and Cantelli \cite[Lemmas 1.2 \& 1.3]{harman} studies precisely this kind of questions.

\begin{thm} \label{thm:Borel--Cantelli}
	Let $(\Omega,\CF,\P)$ be a probability space, let $E_1,E_2,\dots$ be events in that space, and let $E=\limsup_{j\to\infty}E_j$ be the event that infinitely many of the $E_j$'s occur.
	\begin{enumerate}
		\item (The first Borel--Cantelli lemma) If $\sum_{j=1}^\infty \P(E_j)<\infty$, then $\P(E)=0$.
		\item (The second Borel--Cantelli lemma) If $\sum_{j=1}^\infty \P(E_j)=\infty$ and the events $E_1,E_2,\dots$ are mutually independent, then $\P(E)=1$.
	\end{enumerate}
\end{thm}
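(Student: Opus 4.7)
The plan is to express the limsup event set-theoretically as $E = \bigcap_{N=1}^\infty \bigcup_{j \geq N} E_j$, and handle the two halves separately via elementary measure-theoretic estimates. The representation makes $E$ a countable decreasing intersection of the tail unions $U_N := \bigcup_{j \geq N} E_j$, which I will exploit in both directions.

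For part (a), I would use monotonicity together with countable subadditivity. For every $N$ we have $\P(E) \leq \P(U_N) \leq \sum_{j \geq N} \P(E_j)$. Since by hypothesis the full series $\sum_j \P(E_j)$ converges, its tail tends to $0$ as $N \to \infty$, forcing $\P(E) = 0$. No probabilistic structure beyond $\sigma$-subadditivity is needed here.

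For part (b), I would pass to the complementary event $E^c = \bigcup_{N=1}^\infty \bigcap_{j \geq N} E_j^c$ and show $\P\bigl(\bigcap_{j \geq N} E_j^c\bigr) = 0$ for every fixed $N$; then a countable union bound yields $\P(E^c) = 0$. To handle a fixed $N$, I would use the independence of the $E_j$ on any finite range $[N,M]$ to factor $\P\bigl(\bigcap_{j=N}^M E_j^c\bigr) = \prod_{j=N}^M (1 - \P(E_j))$. Applying the elementary estimate $1-x \leq e^{-x}$ for $x \in [0,1]$, this is at most $\exp\bigl(-\sum_{j=N}^M \P(E_j)\bigr)$, which tends to $0$ as $M \to \infty$ because the series diverges. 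Continuity of the measure along the decreasing sequence $\bigcap_{j=N}^M E_j^c$ then gives $\P\bigl(\bigcap_{j \geq N} E_j^c\bigr) = 0$, as required.

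The main point requiring care is in part (b): the independence hypothesis directly gives product formulas only for \emph{finite} subcollections, so one must factor on $[N,M]$, pass to the limit $M \to \infty$ through continuity of $\P$ under decreasing intersections, and only then take a countable union over $N$. The inequality $1-x \leq e^{-x}$ is not strictly necessary (one could instead use $\log(1-x) \leq -x$ directly) but it makes the divergence of the series visibly force the product to vanish, and it is the only analytic input beyond pure measure theory.
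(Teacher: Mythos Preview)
Your proof is correct and is the standard textbook argument for both halves of the Borel--Cantelli lemma. Note, however, that the paper does not actually give its own proof of this theorem: it simply states the result and cites \cite[Lemmas 1.2 \& 1.3]{harman}, so there is no in-paper proof to compare against. Your argument would serve perfectly well as a self-contained proof here.
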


\begin{rem*}
Let $N$ be the random variable that counts how many of the events $E_1,E_2,\dots$ occur. We have $\E[N]=\sum_{j=1}^\infty \P(E_j)$. Hence, Theorem \ref{thm:Borel--Cantelli} says that, under certain assumptions, $N=\infty$ almost surely if, and only if, $\E[N]=\infty$.
\end{rem*}

To use the above result in the set-up of Khinchin's theorem, we let $\Omega=[0,1]$ and equip it with the Lebesgue measure as its probability measure. The relevant events $E_j$ are the sets $\CA_q$. Notice that if $\Delta_q>1/(2q)$, then $\CA_q=[0,1]$, in which case $\CA_q$ occurs immediately for all $x\in[0,1]$. In order to avoid these trivial events, we will assume from now on that 
\eq{\label{eq:Deltaq<1/2q}
	\Delta_q\le1/(2q)\quad\text{for all}\ q\ge1,
	\quad\text{whence}\quad \m(\CA_q) = 2q\Delta_q.
}
In particular, we see that part (a) of Khinchin's theorem is a direct consequence of the first Borel--Cantelli lemma. On the other hand,  the second Borel--Cantelli lemma relies crucially on the assumption that the events $E_j$ are independent of each other, something that fails generically for the events $\CA_q$. However, there are variations of the second Borel--Cantelli lemma, where the assumption of independence can be replaced by weaker quasi-independence conditions on the relevant events (cf.~Section \ref{sec:Borel--Cantelli}). From this perspective, part (b) of Khinchin's theorem can be seen as saying that the condition that the sequence $(q^2\Delta_q)_{q=1}^\infty$ is decreasing guarantees enough approximate independence between the events $\CA_q$ so that the conclusion of the second Borel--Cantelli lemma remains valid.

\smallskip

In 1941, Duffin and Schaeffer published a seminal paper \cite{DS} that studied precisely what is the right way to generalize Khinchin's theorem so that the simple zero--one law of Borel--Cantelli holds. Their starting point was the simple observation that certain choices of the quantities $\Delta_q$ create many dependencies between the sets $\CA_q$, thus rendering many of the denominators $q$ redundant. Indeed, note for example that if $\Delta_3=\Delta_{15}$, then $\CA_3\subseteq\CA_{15}$ because each fraction with denominator $3$ can also be written as a fraction with denominator $15$. By exploiting this simple idea, Duffin and Schaeffer proved the following result:

\begin{prop} \label{prop:DS-counterexample}  There are $\Delta_1,\Delta_2,\dots\ge0$ such that $\sum_{q=1}^\infty q\Delta_q=\infty$ and yet $\m(\CA)=0$.
\end{prop}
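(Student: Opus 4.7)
Following the redundancy observation above---that $d \mid q$ together with $\Delta_d = \Delta_q$ forces $\CA_d \subseteq \CA_q$, inflating $\sum q\Delta_q$ without enlarging $\CA$---I would build $\Delta_q$ by spreading equal weights across many squarefree multiples of each prime. Let $p_1 < p_2 < \cdots$ be the primes, $N_k := p_1 p_2 \cdots p_k$ the $k$-th primorial, and for each squarefree $q > 1$ let $k(q)$ denote the index of its largest prime factor. My choice is
\[
\Delta_q := \frac{1}{N_{k(q)} \cdot k(q) \cdot \log^2 k(q)} \quad (q \text{ squarefree with } k(q) \ge 2),
\]
and $\Delta_q := 0$ otherwise; the standing hypothesis $\Delta_q \le 1/(2q)$ follows from $N_{k(q)} \ge q$. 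The squarefree $q$ with $k(q) = k$ are exactly $q = p_k r$ for squarefree $r \mid N_{k-1}$, so $\sum_{q:\, k(q) = k} q = p_k \sigma(N_{k-1}) \asymp N_k \log p_k$ by Mertens' theorem. Hence
\[
\sum_q q\Delta_q \asymp \sum_k \frac{\log p_k}{k \log^2 k} \asymp \sum_k \frac{1}{k \log k} = \infty.
\]

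To prove $\m(\CA) = 0$, I would apply the first Borel--Cantelli lemma to the level sets $B_k := \bigcup_{q :\, k(q) = k} \CA_q$. The crucial combinatorial observation is that as $(a, q)$ varies with $k(q) = k$ and $0 \le a \le q$, the fractions $a/q$ sweep out precisely the $N_k + 1$ regularly spaced points $\{j/N_k : 0 \le j \le N_k\}$: every reduced $a'/d$ with $d \mid N_k$ arises by taking $q = \lcm(d, p_k)$, and the identity $\sum_{d \mid N_k} \varphi(d) = N_k$ accounts for all of them. Since the common radius $\Delta := 1/(N_k \cdot k \log^2 k)$ satisfies $2\Delta < 1/N_k$ for $k$ sufficiently large, the intervals making up $B_k$ are pairwise disjoint and $\m(B_k) = 2 N_k \Delta \asymp 2/(k \log^2 k)$, a convergent series. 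Thus $\m(\limsup_k B_k) = 0$. Finally, since there are only $2^{k-1}$ squarefree $q$ with $k(q) = k$, any $x \in \CA$ must lie in $\CA_q$ for $q$'s attaining infinitely many values of $k(q)$, and hence in $B_k$ for infinitely many $k$; so $\CA \subseteq \limsup_k B_k$ and $\m(\CA) = 0$.

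The main obstacle is the combinatorial collapse in the preceding paragraph: verifying that the nominally $\sum_{q :\, k(q) = k} (q + 1) \asymp N_k \log p_k$ intervals comprising $B_k$ reduce to only $N_k + 1$ distinct intervals on the regular lattice $\{j/N_k\}$. This collapse---ultimately a manifestation of $\sum_{d \mid N_k} \varphi(d) = N_k$---is the precise source of the factor-$\log k$ gap that separates the divergent $\sum q\Delta_q$ from the convergent $\sum \m(B_k)$; it is the lattice structure of the centers of $B_k$, rather than any independence-type argument, that drives $\m(\CA) = 0$.
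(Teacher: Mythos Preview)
Your construction is identical to the paper's (your set $\{q:\,k(q)=k\}$ is exactly the paper's $\CS_k=\{dp_k:d\mid N_{k-1}\}$, with the same value of $\Delta_q$), and your proof is correct. The only difference is cosmetic: the paper bypasses your lattice combinatorics by the one-line observation that $q\mid N_k$ and $\Delta_q=\Delta_{N_k}$ force $\CA_q\subseteq\CA_{N_k}$, so your $B_k$ simply \emph{is} $\CA_{N_k}$ and the Borel--Cantelli step follows from $\sum_k N_k\Delta_{N_k}=\sum_k 1/(k\log^2 k)<\infty$.
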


\begin{proof}
Let $p_1<p_2<\cdots$ be the primes in increasing order, let $q_j=p_1\cdots p_j$, and let $\CS_j=\{dp_j: d|q_{j-1}\}$. We then set $\Delta_q=(q_jj\log^2 j)^{-1}$ if $q\in\CS_j$ for some $j\ge2$; otherwise, we set $\Delta_q=0$. We claim that this choice satisfies the needed conditions.

Since $\CA_q\subseteq\CA_{q_j}$ for all $q\in\CS_j$, we have $\CA=\limsup_{j\to\infty}\CA_{q_j}$. In addition, since $\sum_{j=1}^\infty q_j \Delta_{q_j}<\infty$,  we have $\m(\limsup_{j\to\infty}\CA_{q_j})=0$ by Theorem \ref{thm:Khinchin}(a). Hence, $\m(\CA)=0$, as needed.  On the other hand, we have that
\als{
\sum_{q\ge1}q\Delta_q = \sum_{j\ge2} \sum_{d|q_{j-1}} dp_j\cdot \frac{1}{q_jj\log^2j} 
	=  \sum_{j\ge2} \frac{1}{j\log^2j} \prod_{i\le j-1} \Big(1+\frac{1}{p_i}\Big) ,
}
By the Prime Number Theorem \cite[Theorem 8.1]{dk-book}, the last product is $\ge c\log j$ for some absolute constant $c>0$. Consequently, $\sum_{q=1}^\infty q\Delta_q=\infty$, as claimed.
\end{proof}

In order to avoid the above kind of counterexamples to the generalized Khinchin theorem, Duffin and Schaeffer were naturally led to consider a modified set-up, where only reduced fractions are used as approximations. They thus defined
\eq{\label{eq:dfn-A*}
	\CA^*:= \big\{x\in[0,1] : \mbox{$|x-a/q|<\Delta_q$ for infinitely many reduced fractions $a/q$}\big\}.
}
We may write $\CA^*$ as the lim sup of the sets
\eq{\label{eq:dfn-Aq*}
	\CA_q^*	:= [0,1]\cap \bigcup_{\substack{0\le a\le q \\ \gcd(a,q)=1}} 
	\Big(\frac{a}{q}-\Delta_q,\frac{a}{q}+\Delta_q\Big) .
}
Assuming that \eqref{eq:Deltaq<1/2q} holds, we readily find that
\[
\m(\CA_q^*) = 2\phi(q)\Delta_q
\]
where
\[
\phi(q):=\#\{1\le a\le q: \gcd(a,q)=1\}
\]
is Euler's totient function. They then conjectured that the sets $\CA_q^*$ have enough mutual quasi-independence so that a simple zero--one law holds, as per the Borel--Cantelli lemmas.

\begin{DSC}
	Let $\Delta_1,\Delta_2,\dots\ge0$ and let $\CA^*$ be defined as in \eqref{eq:dfn-A*}.
	\begin{enumerate}
		\item If $\sum_{q=1}^\infty \phi(q)\Delta_q<\infty$, then $\m(\CA^*)=0$.
		\item If $\sum_{q=1}^\infty \phi(q)\Delta_q=\infty$, then $\m(\CA^*)=1$.
	\end{enumerate}
\end{DSC}

Of course, part (a) follows from Theorem \ref{thm:Borel--Cantelli}(a); the main difficulty is to prove (b).

\smallskip

The Duffin--Schaeffer conjecture is strikingly simple and general. Nonetheless, it does not answer our original question: what is the correct generalization of Khinchin's theorem, where we may use non-reduced fractions? This gap was filled by Catlin \cite{catlin}.

\begin{catlin-conj}
	Let $\Delta_1,\Delta_2,\dots\ge0$, let $\Delta_q'=\sup_{m\ge1}\Delta_{qm}$, and let $\CA$ be as in \eqref{eq:dfn-A}.
		\begin{enumerate}
		\item If $\sum_{q=1}^\infty \phi(q)\Delta_q'<\infty$, then $\m(\CA)=0$.
		\item If $\sum_{q=1}^\infty \phi(q)\Delta_q'=\infty$, then $\m(\CA)=1$.
	\end{enumerate}
\end{catlin-conj}

As Catlin noticed, his conjecture is a direct corollary of the one by Duffin and Schaeffer. Indeed, let us consider the set
\[
\CA'= \big\{x\in[0,1] : \mbox{$|x-a/q|<\Delta_q'$ for infinitely many reduced fractions $a/q$}\big\}.
\]
This is the set $\CA^*$ with the quantities $\Delta_q$ replaced by $\Delta_q'$, so we may apply the Duffin--Schaeffer conjecture to it. In addition, it is straightforward to check that
\eq{\label{eq:catlin}
	\CA\setminus\Q=\CA'\setminus\Q 
}
when $\Delta_q\to0$. This settles Catlin's conjecture in this case. On the other hand, if $\Delta_q\not\to0$,  then $\CA=[0,1]$ and $\sum_{q=1}^\infty \phi(q)\Delta_q'=\infty$, so that Catlin's conjecture is trivially true.

\smallskip

Just like in Theorem \ref{thm:jarnik-besicovitch} of Jarn\'ik and Besicovitch, it would be important to also have information about the Hausdorff dimension of the sets $\CA$ and $\CA^*$ in the case when they have null Lebesgue measure. In light of relation \eqref{eq:catlin}, it suffices to answer this question for the latter set. Beresnevich and Velani \cite{hausdorff DS} proved the remarkable result that the Duffin--Schaeffer conjecture implies a Hausdorff measure version of itself. This is a consequence of a much more general \emph{Mass Transference Principle} that they established, and which allows transfering information concerning the Lebesgue measure of certain $\limsup$ sets to the Hausdorff measure of rescaled versions of them. As a corollary, they proved:

\begin{thm}[Beresnevich--Velani (2006)]  Assume that the Duffin--Schaeffer conjecture is true. Let $\Delta_1,\Delta_2,\dots\ge0$ be such that $\sum_{q=1}^\infty \phi(q)\Delta_q<\infty$. Then the Hausdorff dimension of the set $\CA^*$ defined by \eqref{eq:dfn-A*} equals the infimum of the set of $s>0$ such that  $\sum_{q=1}^\infty \phi(q)\Delta_q^s<\infty$.
\end{thm}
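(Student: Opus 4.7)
Writing $s_0:=\inf\{s>0 : \sum_{q=1}^\infty\phi(q)\Delta_q^s<\infty\}$, the plan is to prove $\dim(\CA^*)=s_0$ by establishing matching upper and lower bounds, with the serious input reserved for the lower bound.

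For the upper bound, I would exploit the fact that $\CA^*$ is a $\limsup$ set: for any $s>s_0$ and any $Q\geq 1$, the intervals $(a/q-\Delta_q,\,a/q+\Delta_q)$ with $q\geq Q$ and $\gcd(a,q)=1$ still cover $\CA^*$, and the sum of the $s$-th powers of their diameters is bounded above by $\sum_{q\geq Q}\phi(q)(2\Delta_q)^s$, which tends to $0$ as $Q\to\infty$ by the choice of $s$. This shows $\mathcal{H}^s(\CA^*)=0$ and hence $\dim(\CA^*)\leq s$; letting $s\searrow s_0$ yields $\dim(\CA^*)\leq s_0$.

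For the lower bound, I would fix $s\in(0,s_0)$ and introduce the rescaled sequence $\widetilde\Delta_q:=\Delta_q^s$ (truncated at $1/(2q)$ if ever needed, which is harmless). By the definition of $s_0$, $\sum_q\phi(q)\widetilde\Delta_q=\infty$, so the assumed Duffin--Schaeffer conjecture applied to $\widetilde\Delta_q$ yields $\m(\widetilde\CA^*)=1$, where $\widetilde\CA^*$ is the analogue of \eqref{eq:dfn-A*} with $\Delta_q$ replaced by $\widetilde\Delta_q$. I would then invoke the Mass Transference Principle of Beresnevich--Velani with the balls $B_{a,q}=(a/q-\Delta_q,\,a/q+\Delta_q)$ (whose radii tend to $0$, since $\sum_q\phi(q)\Delta_q<\infty$ forces $\Delta_q\to 0$) and the dimension function $f(r)=r^s$. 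The rescaled balls have radii $\Delta_q^s=\widetilde\Delta_q$, so their $\limsup$ is precisely $\widetilde\CA^*$; since this rescaled $\limsup$ has full Lebesgue measure in $[0,1]$, the principle transfers this to $\mathcal{H}^s(\CA^*)=\mathcal{H}^s([0,1])=\infty$, and in particular $\dim(\CA^*)\geq s$. Letting $s\nearrow s_0$ completes the argument.

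The main obstacle in a from-scratch proof would be the Mass Transference Principle itself, a non-trivial general theorem of Beresnevich--Velani that allows one to trade Lebesgue-measure statements about a $\limsup$ of balls for Hausdorff-measure statements about a rescaled $\limsup$. Since the paper permits us to cite it, the remaining content collapses to choosing the correct reparametrization $\widetilde\Delta_q=\Delta_q^s$ so that the assumed Duffin--Schaeffer conjecture feeds into the principle at precisely the right scale, together with a routine covering argument for the matching upper bound.
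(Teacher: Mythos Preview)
The paper does not actually prove this theorem; it merely states it and attributes it to Beresnevich--Velani as a corollary of their Mass Transference Principle. Your proposal is precisely the standard derivation the paper alludes to---a routine covering argument for the upper bound, and for the lower bound the reparametrization $\widetilde\Delta_q=\Delta_q^s$ so that the Duffin--Schaeffer conjecture yields full Lebesgue measure for the rescaled $\limsup$, which the Mass Transference Principle then converts into $\mathcal{H}^s(\CA^*)=\mathcal{H}^s([0,1])=\infty$ for each $s<s_0$. The argument is correct and in line with what the paper indicates.
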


\subsection{Progress towards the Duffin--Schaeffer conjecture}

Since its introduction in 1941, the Duffin--Schaeffer conjectured has been the subject of intensive research activity, with various special cases proven over the years. This process came to a conclusion recently with the proof of the full conjecture  \cite{DS-proof}.

\begin{thm}[Koukoulopoulos--Maynard (2020)] \label{thm:DS proof}
	The Duffin--Schaeffer conjecture is true.
\end{thm}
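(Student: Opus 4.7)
Part (a) of the conjecture is immediate from the first Borel--Cantelli lemma applied to the sets $\CA^*_q$, since $\m(\CA^*_q) = 2\phi(q)\Delta_q$ by \eqref{eq:Deltaq<1/2q}. So the heart of the matter is part (b), which I would split into two independent steps: first show $\m(\CA^*)>0$, and then upgrade this to $\m(\CA^*)=1$. The upgrade is a classical theorem of Gallagher, which exploits the fact that $\CA^*$ is invariant, modulo null sets, under addition of any rational, and so satisfies a zero--one law. Thus I may focus entirely on proving $\m(\CA^*)>0$.

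For the positive-measure statement, the standard device is a quasi-independent variant of the second Borel--Cantelli lemma: if $\sum_q \m(\CA^*_q) = \infty$ and there exists $C>0$ such that
\[
\sum_{q, r \le Q} \m(\CA^*_q \cap \CA^*_r) \le C \left(\sum_{q \le Q} \m(\CA^*_q)\right)^2
\]
for infinitely many $Q$, then $\m(\limsup_{q\to\infty} \CA^*_q) \ge 1/C$. The divergence hypothesis $\sum_q \phi(q)\Delta_q = \infty$ gives the first input for free, so the entire problem reduces to the bilinear \emph{overlap estimate} above. A direct Fourier-free computation from \eqref{eq:dfn-Aq*} writes $\m(\CA^*_q \cap \CA^*_r)$ as a count of near-coincident reduced fractions $a/q$ and $b/r$, which after routine manipulation yields a bound of the shape
\[
\m(\CA^*_q \cap \CA^*_r) \ll \m(\CA^*_q)\,\m(\CA^*_r) \cdot K(q,r),
\]
where $K(q,r) = \prod_{p} \frac{p}{p-1}$ is taken over the primes $p$ that divide $\gcd(q,r)$ and lie in a specific range determined by $\Delta_q$ and $\Delta_r$. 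When $q$ and $r$ share no primes in that range, $K(q,r)$ is bounded and the overlap factorizes as desired. The task is therefore to control the contribution of the \emph{anomalous pairs}, meaning those with $K(q,r)$ large.

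The key ingredient I would deploy is the machinery of \emph{GCD graphs}. After dyadically localizing the sizes of $q$, of $\phi(q)/q$, and of $\Delta_q$, the problem becomes one of bounding the anomalous pairs within a family $\CV$ of denominators of comparable size and comparable $\Delta_q$. Viewing these pairs as edges of a graph $G$ on vertex set $\CV$, one pigeonholes over the possible value of $\gcd(q,r)$ and over the small prime factors of $q/\gcd(q,r)$ and $r/\gcd(q,r)$, thereby passing to a subgraph in which the divisor structure of every edge is identical; such a subgraph is what I will call a GCD graph. The core of the proof is then an iterative compression lemma: any sufficiently dense GCD graph admits a proper sub-GCD-graph that retains a positive proportion of the edge weight and has a strictly improved quality parameter. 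Since the quality parameter cannot improve indefinitely, the iteration terminates, and the terminal graph is too sparse to violate the overlap estimate.

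The main obstacle is proving the compression step. It rests on delicate anatomy-of-integers estimates controlling how many integers of a prescribed size can share many common prime factors with a single distinguished integer, coupled with a combinatorial accounting that must simultaneously (i) drop only a bounded proportion of the remaining vertices, (ii) preserve a positive fraction of the edge weight, and (iii) strictly improve a suitably chosen potential. Designing a potential with all three properties, and an update rule that is compatible with the multiplicative structure of the correlation factor $K(q,r)$, is the single most delicate part of the argument; once the compression lemma is in place, the passage back to the overlap estimate and thence to the Duffin--Schaeffer conjecture is fairly routine Borel--Cantelli bookkeeping.
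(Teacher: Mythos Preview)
Your broad architecture matches the paper's: first Borel--Cantelli for part (a), Gallagher's zero--one law to reduce (b) to $\m(\CA^*)>0$, the Cauchy--Schwarz quasi-independence reduction to a bilinear overlap estimate, the Pollington--Vaughan bound on $\m(\CA^*_q\cap\CA^*_r)$, and an iterative compression on GCD graphs. But two points in your description are not merely imprecise; as stated they would send the argument off course.

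First, your correlation factor $K(q,r)$ is set up over the wrong primes. In the Pollington--Vaughan estimate (Lemma~\ref{lem:PV}) the excess factor is $\exp\big(\sum_{p\mid qr/\gcd(q,r),\ p>M(q,r)}1/p\big)$: the primes that inflate the overlap are those dividing \emph{exactly one} of $q,r$, not those dividing $\gcd(q,r)$. The role of $\gcd(q,r)$ is different: $\CA^*_q\cap\CA^*_r=\emptyset$ unless $M(q,r)=2\max\{\Delta_q,\Delta_r\}\lcm[q,r]>1$, which forces $\gcd(q,r)$ to be large. Thus the ``anomalous'' pairs are those with (i) $\gcd(q,r)$ large \emph{and} (ii) many primes just above the threshold dividing the non-common part $qr/\gcd(q,r)^2$. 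The GCD-graph iteration is designed to extract from (i) a \emph{universal} large divisor $d$, after which (ii) becomes a statement about the small quotients $q/d,r/d$ and is handled by the Erd\H os--Vaaler counting argument. If $K(q,r)$ ran over $p\mid\gcd(q,r)$ this two-step mechanism would have nothing to bite on.

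Second, the compression step is not ``retain a positive proportion of the edge weight while improving quality''. The paper's quality $q(G)=\delta(G)^9\mu(\CE)\cdot\frac{ab}{\gcd(a,b)^2}\cdot\frac{ab}{\phi(ab)}\cdot\prod_{p\in\CP}(1-p^{-3/2})^{-10}$ is engineered precisely so that one may \emph{discard} a proportion $1-O(1/p)$ of the edges when passing to an asymmetric subgraph (the prime $p$ divides one side but not the other), because the structural term $ab/\gcd(a,b)^2$ then gains a full factor of $p$. Nor does the iteration terminate because quality is bounded; it terminates because the set $\CR(G)$ of unprocessed primes eventually empties. A quality upper bound is obtained only at the terminal graph, from the explicit divisor structure there, and is then compared to the initial quality to bound $\mu(\CB_t)$.
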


We will outline the main ideas of the proof of Theorem \ref{thm:DS proof} in \S \ref{sec:proof}. But first we give an account of the work that preceeded it.

\begin{notation}
	Given two functions $f,g:X\to\R$, we write $f(x)\ll g(x)$ (or $f(x)=O(g(x))$) for all $x\in X$ to mean that there is a constant $C$ such that $|f(x)|\le Cg(x)$ for all $x\in Y$.
\end{notation}

In the same paper where they introduced their conjecture, Duffin and Schaeffer proved the first general case of it:

\begin{thm}[Duffin--Schaeffer (1941)] \label{thm:DS for normal integers} 	
The Duffin--Schaeffer conjecture is true for all sequences $(\Delta_q)_{q=1}^\infty$ such that
	\eq{\label{eq:DS-condition}
	\limsup_{Q\to\infty} \frac{\sum_{q\le Q} \phi(q)\Delta_q}{\sum_{q\le Q}q\Delta_q} >0 .
	}
\end{thm}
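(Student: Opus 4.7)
Part (a) is immediate: by \eqref{eq:Deltaq<1/2q}, $\m(\CA_q^*)=2\phi(q)\Delta_q$, so if $\sum_q\phi(q)\Delta_q<\infty$ the first Borel--Cantelli lemma gives $\m(\CA^*)=0$. For part (b), my plan is to combine a zero--one law with a quantitative second Borel--Cantelli lemma, with the Duffin--Schaeffer hypothesis \eqref{eq:DS-condition} entering only at the pair-correlation step.

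A zero--one law (for example, the one later established by Cassels and Gallagher, that $\m(\CA^*)\in\{0,1\}$ for \emph{any} choice of $(\Delta_q)$) reduces the problem to showing $\m(\CA^*)>0$. For this I would use the Chung--Erd\H os inequality: for any events $A_1,A_2,\dots$ in a probability space and any $N$,
\[
\P\Big(\bigcup_{q\geq N}A_q\Big)\geq\liminf_{Q\to\infty}\frac{\big(\sum_{N\leq q\leq Q}\P(A_q)\big)^2}{\sum_{N\leq q,r\leq Q}\P(A_q\cap A_r)}.
\]
Applied with $A_q=\CA_q^*$ and combined with $\CA^*=\limsup_q\CA_q^*$, this would yield $\m(\CA^*)>0$ as soon as the pair-correlation ratio on the right is bounded uniformly in $N$ along some unbounded sequence of $Q$'s.

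The heart of the argument is therefore the bound on $\m(\CA_q^*\cap\CA_r^*)$ for distinct $q,r$. Two reduced fractions $a/q$ and $b/r$ produce overlapping $\Delta$-intervals exactly when $|ar-bq|<qr(\Delta_q+\Delta_r)$; counting such pairs $(a,b)$ with $\gcd(a,q)=\gcd(b,r)=1$ yields a bound of the form
\[
\m(\CA_q^*\cap\CA_r^*)\ll \m(\CA_q^*)\,\m(\CA_r^*)+E(q,r),
\]
where $E(q,r)$ is an error accounting for near-coincidences $a/q\approx b/r$ forced by common prime factors of $q$ and $r$. A multiplicative/sieve computation, organized by $d=\gcd(q,r)$, gives $\sum_{q,r\leq Q}E(q,r)\ll(\sum_{q\leq Q}q\Delta_q)^2$. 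The hypothesis \eqref{eq:DS-condition} now closes the argument: along the subsequence of $Q$'s on which $\sum_{q\leq Q}\phi(q)\Delta_q\geq c\sum_{q\leq Q}q\Delta_q$, one has
\[
\Big(\sum_{q\leq Q}q\Delta_q\Big)^2\ll_c\Big(\sum_{q\leq Q}\phi(q)\Delta_q\Big)^2=\tfrac14\Big(\sum_{q\leq Q}\m(\CA_q^*)\Big)^2,
\]
so the error is absorbed into the main term and the Chung--Erd\H os ratio stays bounded.

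The principal technical obstacle is the estimate $\sum_{q,r\leq Q}E(q,r)\ll(\sum_{q\leq Q}q\Delta_q)^2$. Carrying it out requires writing $q=dq_1$, $r=dr_1$ with $\gcd(q_1,r_1)=1$, and bounding divisor-weighted sums involving $\phi$; this is technical but purely multiplicative. What this method genuinely cannot handle is the case in which \eqref{eq:DS-condition} fails and $\sum q\Delta_q$ is much larger than $\sum\phi(q)\Delta_q$, i.e.\ when the mass is carried by integers $q$ with abnormally small $\phi(q)/q$. Closing that gap is the hard content of the full conjecture treated in \S\ref{sec:proof}.
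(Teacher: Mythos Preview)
The paper does not actually give a proof of Theorem~\ref{thm:DS for normal integers}; it merely states the result, explains when condition~\eqref{eq:DS-condition} holds (namely, when the support of $\Delta_q$ avoids integers with abnormally many small prime factors), and moves on. So there is no paper proof to compare against line by line. That said, your outline is correct and is exactly the modern route: it is the specialisation of the framework in \S\ref{sec:Borel--Cantelli} (Gallagher's zero--one law plus Lemma~\ref{lem:CS}/Proposition~\ref{prop:DS via CS}) to the situation where the crude overlap bound already suffices.

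Two small corrections to your write-up. First, the decomposition $\m(\CA_q^*\cap\CA_r^*)\ll \m(\CA_q^*)\m(\CA_r^*)+E(q,r)$ is unnecessary and slightly misleading: the ``main term'' $\phi(q)\phi(r)\Delta_q\Delta_r$ does no work here. The actual Duffin--Schaeffer overlap estimate is simply
\[
\m(\CA_q^*\cap\CA_r^*)\le 8\,q\Delta_q\cdot r\Delta_r\qquad(q\neq r),
\]
and this is what you should sum. It follows from an elementary count: writing $m=ar-bq$, the reducedness of $a/q$ and $b/r$ with $q\neq r$ forces $m\neq0$, and for each admissible $m$ there are at most $\gcd(q,r)$ pairs $(a,b)$; since $\gcd(q,r)\mid m$, the total number of overlapping pairs is $\le 2qr(\Delta_q+\Delta_r)$, and each contributes measure $\le 2\min(\Delta_q,\Delta_r)$. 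No sieve or multiplicative machinery is needed at this stage --- the coprimality conditions are used only to rule out $m=0$. Second, summing gives $\sum_{q\neq r\le Q}\m(\CA_q^*\cap\CA_r^*)\ll(\sum_{q\le Q}q\Delta_q)^2$ directly, and then \eqref{eq:DS-condition} converts this to $\ll_c(\sum_{q\le Q}\phi(q)\Delta_q)^2$ along the good subsequence of $Q$'s, exactly as you say. The anachronism of invoking Gallagher's 1961 law for a 1941 theorem is harmless; it is the standard modern packaging, and matches what the paper itself does in \S\ref{sec:Borel--Cantelli}.
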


To appreciate this result, we must make a few comments about condition \eqref{eq:DS-condition}. Note that its left-hand side is the average value of $\phi(q)/q$ over $q\in[1,Q]$, where $q$ is weighted by $w_q:=q\Delta_q$. In particular, we may restrict our attention to $q$ with $\Delta_q>0$. Now, we know
\[
\frac{\phi(q)}{q} = \prod_{p|q}\Big(1-\frac{1}{p}\Big) .
\]
In particular, $\phi(q)/q\le1$, and the only way this ratio can become much smaller than 1 is if $q$ is divisible by lots of small primes.  To see this, let us begin by observing that $q$ can have at most $\log q/\log2$ prime factors in total. Therefore,
\eq{\label{eq:big primes}
\prod_{p|q,\ p>\log q} \Big(1-\frac{1}{p}\Big) \ge \Big(1-\frac{1}{\log q}\Big)^{\log q/\log 2} \ge\frac{1}{5}
}
for $q$ large enough. In addition, we have
\eq{\label{eq:big primes2}
	\prod_{\substack{(\log q)^{0.01}<p\le \log q \\  p|q }} \Big(1-\frac{1}{p}\Big) \ge
		\prod_{(\log q)^{0.01}<p\le \log q } \Big(1-\frac{1}{p}\Big) \ge \frac{1}{200}
}
for $q$ large enough by Mertens' estimate \cite[Theorem 3.4]{dk-book}. Already the above inequalities show that only the primes $\le(\log q)^{0.01}$ can affect the size of $\phi(q)/q$. But more is true: $\phi(q)/q$ is small only if $q$ is divided by \emph{many} primes $\le (\log q)^{0.01}$.  Imagine for example that 
\eq{\label{eq:not-too-many-small-primes}
	\#\{p|q: e^{j-1}<p\le e^j\} \le e^{j}/j^2 + 1000
}
for $j=1,2,\dots, 1+\fl{0.01\log\log q}$. We would then have
\[
\prod_{\substack{e^{j-1}<p\le e^j\\ p|q}} \Big(1-\frac{1}{p}\Big) \ge \Big(1-\frac{1}{e^j}\Big)^{e^{j}/j^2+1000} = \exp\big(- 1/j^2+O(e^{-j})\big) .
\]
Multiplying this over all $j$, we deduce that $\phi(q)/q\ge c$ for some $c>0$ independent of $q$. 

We have thus proven that for \eqref{eq:DS-condition} to fail, the main contribution to the weighted sum $\sum_{q\le Q}w_q$ with $w_q=q\Delta_q$ must come from integers for which \eqref{eq:not-too-many-small-primes} fails. As a matter of fact, \eqref{eq:not-too-many-small-primes} must fail for lots of $j$'s. This is an extremely rare event if we choose $q$ \emph{uniformly at random} from $[1,Q]$ (or even if we choose it uniformly at random from various ``nice'' subsets of $[1,Q]$, such as the primes, or the values of a monic polynomial with integer coefficients).  A simple way to see this is to calculate the average value of the function $\#\{p|q: e^{j-1}<p\le e^j\}$ with respect to the uniform counting measure on $[1,Q]$. We have
\[
 	\frac{1}{Q}\sum_{q\le Q} \#\{p|q: e^{j-1}<p\le e^j\} 
 		 = \sum_{e^{j-1}<p\le e^j} \frac{\#\{q\le Q: p|q\} }{Q} 
 		\le \sum_{e^{j-1}<p\le e^j} \frac{1}{p} \ll \frac{1}{j} 
\]
by Mertens' theorem \cite[Theorem 3.4]{dk-book}. This is \emph{much} smaller than $e^j/j^2$, so \eqref{eq:not-too-many-small-primes} should fail rarely as $j\to\infty$. (For instance, we may use Markov's inequality to see this claim.)

In conclusion, Theorem \ref{thm:DS for normal integers} settles the Duffin--Schaeffer conjecture when $\Delta_q$ is mainly supported on ``normal'' integers, without too many small prime factors. In particular, it implies a significant improvement of Theorems \ref{thm:matomaki} and \ref{thm:zaharescu} for almost all $x\in\R$.

\begin{cor} \label{cor:DS for primes}
	For almost all $x\in\R$, there are infinitely many reduced fractions $a/p$ and $b/q^2$ such that $p$ is prime, $|x-a/p|<p^{-2}$ and  $|x-b/q^2|<q^{-3}$.
\end{cor}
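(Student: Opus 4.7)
My plan is to derive both halves of the corollary from Theorem~\ref{thm:DS for normal integers} by choosing $\Delta_q$ supported on primes and on squares, respectively, and then verifying that the weighted-average condition \eqref{eq:DS-condition} holds in each case. Intersecting the two resulting full-measure sets then yields the statement.

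For the prime case, I would take $\Delta_q=q^{-2}$ when $q$ is prime and $\Delta_q=0$ otherwise. For $q\ge 2$ this respects the normalization \eqref{eq:Deltaq<1/2q}, and the corresponding set $\CA^*$ is exactly the set of $x$ with infinitely many reduced fractions $a/p$, $p$ prime, satisfying $|x-a/p|<p^{-2}$. Since $\phi(p)=p-1$, both $\sum_{q\le Q}\phi(q)\Delta_q$ and $\sum_{q\le Q}q\Delta_q$ are asymptotic to $\sum_{p\le Q}1/p\sim\log\log Q$ by Mertens' theorem; in particular $\sum_{q=1}^\infty\phi(q)\Delta_q=\infty$ and the ratio in \eqref{eq:DS-condition} tends to $1$, so Theorem~\ref{thm:DS for normal integers} yields $\m(\CA^*)=1$.

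For the square case, I would take $\Delta_n=n^{-3/2}$ when $n$ is a perfect square and $\Delta_n=0$ otherwise, so that the associated $\CA^*$ captures reduced approximations $b/q^2$ with $|x-b/q^2|<q^{-3}$. Writing $n=q^2$ and using the identity $\phi(q^2)=q\phi(q)$, I obtain
\[
\sum_{n\le N}\phi(n)\Delta_n=\sum_{q\le\sqrt{N}}\frac{\phi(q)}{q^2}\qquad\text{and}\qquad\sum_{n\le N}n\Delta_n=\sum_{q\le\sqrt{N}}\frac{1}{q}.
\]
From the classical asymptotic $\sum_{q\le Q}\phi(q)/q\sim(6/\pi^2)Q$ (an easy consequence of $\phi=\mu\ast\mathrm{id}$), partial summation gives that the first sum is $\sim(6/\pi^2)\log\sqrt{N}$ while the second is $\sim\log\sqrt{N}$. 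Hence $\sum_{n=1}^\infty\phi(n)\Delta_n=\infty$ and the ratio in \eqref{eq:DS-condition} tends to $6/\pi^2>0$, so Theorem~\ref{thm:DS for normal integers} once again produces a full-measure set of good $x$.

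There is no real obstacle here: the deep content lives inside Theorem~\ref{thm:DS for normal integers}, and what remains to be checked is only that primes and perfect squares are sufficiently "normal" in the sense of \eqref{eq:DS-condition}. This is a statement about the weighted average of $\phi(q)/q$ with weights $w_q=q\Delta_q$ concentrated on these two special sequences, and it follows from Mertens' theorem in the first case and from the well-known mean value of $\phi/q$ in the second.
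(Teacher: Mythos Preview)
Your proposal is correct and is precisely the argument the paper has in mind: the corollary is stated immediately after Theorem~\ref{thm:DS for normal integers} as a direct consequence, with no proof given, and your verification that \eqref{eq:DS-condition} holds for $\Delta_q$ supported on primes (ratio $\to 1$) and on squares (ratio $\to 6/\pi^2$) is exactly the intended check.
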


The next important step towards the Duffin--Schaeffer conjecture is a remarkable \emph{zero--one law} due to Gallagher \cite{gallagher}.

\begin{thm}[Gallagher (1961)] \label{thm:gallagher} If $\CA^*$ is as in \eqref{eq:dfn-A*}, then $\m(\CA^*)\in\{0,1\}$.
\end{thm}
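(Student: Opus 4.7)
Plan. The strategy is to show that if $\m(\CA^*)>0$ then $\m(\CA^*)=1$, by combining Lebesgue's density theorem with the equidistribution of reduced fractions. Suppose $\m(\CA^*)>0$; then almost every point of $\CA^*$ is a Lebesgue density point, so fix $\xi_0\in\CA^*$ with the property that, for each $\eta>0$, there exists $r_0>0$ such that $\m(\CA^*\cap(\xi_0-r,\xi_0+r))>(1-\eta)\cdot 2r$ for all $r\in(0,r_0]$. It will suffice to transfer this high-density estimate to an arbitrary point $y\in[0,1]$, since then $\CA^*$ has density one at almost every point of $[0,1]$ and $\m(\CA^*)=1$ follows by another application of the density theorem.

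The transfer exploits that $\CA^*$ is a tail set: $\CA^*=\bigcap_{N\ge 1}F_N$ with $F_N:=\bigcup_{q\ge N}\CA_q^*$, so altering $\Delta_q$ on any finite set of $q$'s leaves $\CA^*$ unchanged. Each $\CA_q^*$ is a union of $\phi(q)$ arcs of length $2\Delta_q$ centered at reduced fractions $a/q$, and the crucial analytic input is that these fractions equidistribute on $[0,1]$ with polynomial discrepancy in $1/q$ (a consequence of standard bounds on Ramanujan-type sums). Thus, for any interval $J$ of length $r\gg q^{-1+\eps}$, one has $\m(\CA_q^*\cap J)=(1+o(1))\m(\CA_q^*)\cdot|J|$. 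Summing across $q\ge N$ should grant $F_N$ an approximate translation-invariance at scale $r$, provided $N$ is taken large depending on $r$. Using $\CA^*\subseteq F_N$ and the density-at-$\xi_0$ estimate, one would then conclude that $\m(\CA^*\cap(y-r,y+r))\ge(1-O(\eta))\cdot 2r$ for every $y\in[0,1]$, which suffices.

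The main obstacle is reconciling two incompatible scales: the density at $\xi_0$ is extracted from intervals of arbitrarily small length $r$, whereas the equidistribution of reduced fractions $a/q$ is only effective for $r\gg q^{-1+\eps}$. I expect to resolve this with a diagonal argument, choosing $N=N(r)\to\infty$ as $r\to 0$ and controlling the equidistribution errors summed over $q\ge N$ so that the total error remains $o(r)$. Balancing the Lebesgue density scale against this arithmetic scale --- and ensuring that the bulk of the mass of $\CA^*$ near $\xi_0$ comes from denominators large enough for the discrepancy bound to apply --- is, I believe, where the core technical work lies.
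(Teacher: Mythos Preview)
First, note that the paper does not itself prove Gallagher's theorem: it proves only the simpler Cassels zero--one law (Theorem~\ref{thm:cassels}) for the non-reduced set $\CA$, and for the reduced case refers the reader to \cite{gallagher,harman}, describing the method as ``a clever adaptation of an ergodic-theoretic argument due to Cassels''. In that scheme one first passes, via the density lemma quoted in the proof of Theorem~\ref{thm:cassels}, from $\CA^*$ to $\CA^{*,(\infty)}=\bigcap_r\CA^{*,(r)}$ (where $\CA^{*,(r)}$ is defined with $\Delta_q/r$ in place of $\Delta_q$), and then exhibits ergodic measure-preserving maps of $[0,1]$ leaving $\CA^{*,(\infty)}$ invariant; for Cassels the doubling map $x\mapsto\{2x\}$ suffices, while Gallagher must work harder because doubling does not respect the coprimality constraint $\gcd(a,q)=1$.

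Your route---transferring local density directly via equidistribution of Farey fractions---is genuinely different, but as planned it has a real gap, and not the one you identify. The claim that ``summing across $q\ge N$ should grant $F_N$ an approximate translation-invariance at scale $r$'' does not follow from the equidistribution you cite. Knowing $\m(\CA_q^*\cap J)=(1+o(1))\m(\CA_q^*)\,|J|$ for each individual $q\ge N$ controls the \emph{sum} $\sum_{q\ge N}\m(\CA_q^*\cap J)$, but tells you nothing about $\m(F_N\cap J)=\m\big(\bigcup_{q\ge N}\CA_q^*\cap J\big)$: the measure of the union depends on how the sets $\CA_q^*$ \emph{overlap} inside $J$, hence on the joint distribution of reduced fractions $a/q,\,b/r$ with distinct denominators, and nothing in your toolkit forces this overlap pattern to be the same near $\xi_0$ as near $y$. (Two sets can each fill exactly half of every interval of a given length, yet their union may cover one such interval entirely and only half of another.) Balancing $N$ against $r$ cannot repair this; even with perfect single-$q$ discrepancy the conclusion $\m(F_N\cap I_{\xi_0})\approx\m(F_N\cap I_y)$ is unsupported. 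This is precisely why the Cassels--Gallagher approach does not attempt to compare $F_N$ in two windows, but instead produces maps that carry the (shrunk) set to itself exactly, bypassing the overlap problem altogether.
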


Gallagher's theorem says \emph{grosso modo} that either we chose the quantities $\Delta_q$ to be ``too small'' and thus missed almost all real numbers, or we chose them ``sufficiently large'' so that almost all numbers have the desired rational approximations. The Duffin--Schaeffer conjecture is then the simplest possible criterion to decide in which case we are.

The proof of Theorem \ref{thm:gallagher} is a clever adaptation of an ergodic-theoretic argument due to Cassels \cite{cassels 0-1} in the simpler setting of non-reduced rational approximations. We give Cassel's proof and refer the interested readers to \cite{gallagher,harman} for the proof of Theorem \ref{thm:gallagher}.

\begin{thm}[Cassels (1950)] \label{thm:cassels} 
If $\CA$ is as in \eqref{eq:dfn-A}, then $\m(\CA)\in\{0,1\}$.
\end{thm}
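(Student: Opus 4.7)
The plan is to prove the zero--one dichotomy for $\CA$ via a Lebesgue density argument that exploits the periodic structure of the sets $\CA_q$ defined in \eqref{eq:dfn-Aq}. Assume $\m(\CA)>0$; I aim to deduce $\m(\CA)=1$. First I would apply the Lebesgue density theorem to $\CA$: for any $\eps>0$, there is a density point $x_0\in\CA$ and a $\delta_0>0$ such that $\m(\CA\cap(x_0-\delta,x_0+\delta))\ge(1-\eps)\cdot2\delta$ for all $0<\delta<\delta_0$.

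Next I would exploit the translation symmetry of each $\CA_q$. By construction, $\CA_q$ is invariant under the rotation $x\mapsto x+1/q$ on $\R/\Z$, and (once we assume \eqref{eq:Deltaq<1/2q}) consists of $q$ disjoint equally spaced intervals of length $2\Delta_q$. Consequently, for any subinterval $I\subseteq[0,1]$ of length $L$,
\[
\m(\CA_q\cap I)=L\cdot\m(\CA_q)+O(\Delta_q),
\]
so $\CA_q$ is ``uniformly distributed'' in $I$, up to a boundary error that is small compared to $\m(\CA_q)$ as $q$ grows.

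Then I would propagate this near-homogeneity to $\CA$ itself. Writing $\CA=\bigcap_N\bigcup_{q\ge N}\CA_q$ and applying the uniform-distribution estimate to each $\CA_q$, the goal is to conclude that $\m(\CA\cap I_1)=\m(\CA\cap I_2)$ for any two intervals $I_1,I_2\subseteq[0,1]$ of the same length. Taking $I_2$ to be a small neighborhood of $x_0$ then shows that $\m(\CA\cap I_1)/\m(I_1)\ge1-\eps$ for every $I_1$, and integrating over a partition of $[0,1]$ into such intervals yields $\m(\CA)\ge1-\eps$. Letting $\eps\to0$ gives $\m(\CA)=1$.

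The main obstacle is the third step: passing from approximate translation invariance of each individual $\CA_q$ to exact translation invariance of the $\limsup$ set $\CA$. The $\limsup$ does not commute with Lebesgue measure, so one cannot simply pass the estimate above to the limit. My plan is to work instead with finite unions $\bigcup_{N\le q\le M}\CA_q$, bound the measure of their intersection with $I$ via inclusion--exclusion or Bonferroni-type inequalities involving the $\m(\CA_q\cap I)$, and carefully track the accumulated boundary errors so that they vanish upon taking $M\to\infty$ and then $N\to\infty$.
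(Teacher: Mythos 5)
The gap is in your third step, and it is not a technicality that bookkeeping will fix. Approximate homogeneity of each individual $\CA_q$ does not propagate to finite unions, let alone to the $\limsup$. An inclusion--exclusion or Bonferroni bound for $\m\bigl(\bigcup_{N\le q\le M}\CA_q\cap I\bigr)$ requires the quantities $\m(\CA_{q_1}\cap\cdots\cap\CA_{q_k}\cap I)$, and these intersection sets are only periodic with period $1/\gcd(q_1,\dots,q_k)$, which is typically $1$; they are therefore not equidistributed over subintervals of $[0,1]$ at any fixed scale. The resulting discrepancies between $\m\bigl((\CA_q\cup\CA_r)\cap I_1\bigr)$ and $\m\bigl((\CA_q\cup\CA_r)\cap I_2\bigr)$ are genuine correlation terms of size comparable to $\m(\CA_q\cap\CA_r)$, not $O(\Delta_q)$ boundary errors, and summed over $N\le q<r\le M$ they do not vanish as $M\to\infty$; controlling such pairwise overlaps is exactly the hard part of the Duffin--Schaeffer problem, which a zero--one law is meant to let you bypass. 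Nor can you get the desired exact identity $\m(\CA\cap I_1)=\m(\CA\cap I_2)$ from symmetry: $\CA$ is not invariant under any nontrivial translation (translating by $p/r$ only respects those $q$ with $r\mid pq$), so exact homogeneity of $\CA$ is essentially equivalent to the theorem itself rather than a stepping stone to it.

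For comparison, the paper's (Cassels') proof uses Lebesgue density in a different way: not to produce a density point of $\CA$, but to compare $\CA$ with its shrunken versions $\CA^{(r)}$ (defined with $\Delta_q/r$ in place of $\Delta_q$), via the lemma that $\limsup$ sets of nested intervals with comparable lengths differ by null sets. This gives a set $\CA^{(\infty)}=\bigcap_r\CA^{(r)}$ with $\m(\CA\setminus\CA^{(\infty)})=0$ which, unlike $\CA$, is exactly forward-invariant under the doubling map $\psi(x)=\{2x\}$; ergodicity of $\psi$ and Birkhoff's theorem then give $\m(\CA^{(\infty)})\in\{0,1\}$ with no correlation estimates at all. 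If you want to salvage your outline, you need some exact invariance to quote, and the shrinking trick (or an equivalent quasi-invariance statement) is the missing ingredient that supplies it.
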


\begin{proof}
We need the following fact \cite[Lemma 2.1]{harman} that uses Lebesgue's Density Theorem: Let $I_1,I_2,\dots,\ J_1,J_2,\dots$ be intervals of lengths tending to 0, and let $c>0$. For all $k$, suppose $J_k\subseteq I_k$ and  $\m(J_k)\ge c\m(I_k)$. Then $\m(\limsup_{k\to\infty}I_k\setminus \limsup_{k\to\infty}J_k)=0$.

Now, for each $r\ge1$, let $\CA^{(r)}$ be defined as in \eqref{eq:dfn-A} but with $\Delta_q/r$ in place of $\Delta_q$. Hence, $\m(\CA\setminus\CA^{(r)})=0$ by the above fact. Therefore, if $\CA^{(\infty)}:=\bigcap_{n=1}^\infty\CA^{(n)}$, then $\m(\CA\setminus\CA^{(\infty)})=0$. Now, consider the map $\psi:[0,1]\to[0,1]$ defined by $\psi(x):=\{2x\}$, and note that $\psi(\CA^{(\infty)})\subseteq \CA^{(\infty)}$. In particular, $\frac{1}{N}\sum_{n=0}^{N-1}1_{\CA^{(\infty)}}(\psi^n(x))=1$ for all $x\in\CA^{(\infty)}$ and all $N\in\N$. Since $\psi$ is ergodic with respect to the Lebesgue measure \cite[p. 293 \& 305-6]{SS}, Birkhoff's Ergodic Theorem \cite[Ch.~6, Cor.~5.6]{SS} implies that $\m(\CA^{(\infty)})\in\{0,1\}$.
\end{proof}

The first significant step towards establishing the Duffin--Schaeffer conjecture for irregular sequences $\Delta_q$, potentially supported on integers with lots of small prime factors, was carried out by Erd\H os \cite{erdos} and Vaaler \cite{vaaler}.

\begin{thm}[Erd\H os (1970) -- Vaaler (1978)] \label{thm:Erdos-Vaaler} The Duffin--Schaeffer conjecture is true for all sequences $(\Delta_q)_{q=1}^\infty$ such that $\Delta_q=O(1/q^2)$ for all $q$.
\end{thm}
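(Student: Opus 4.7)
The plan is to deduce Theorem \ref{thm:Erdos-Vaaler} from a second-moment quasi-independence estimate for the sets $\CA_q^*$, combined with Gallagher's zero--one law. By Theorem \ref{thm:gallagher} we have $\m(\CA^*)\in\{0,1\}$, so under the divergence hypothesis $\sum_q \phi(q)\Delta_q=\infty$ it suffices to show $\m(\CA^*)>0$. This in turn follows from the Chung--Erd\H os inequality (applied to $\CA_{Q_0}^*,\ldots,\CA_Q^*$ and a passage to the tail) once we establish a bound of the shape
\eq{\label{eq:plan-goal}
\sum_{q,r\le Q} \m(\CA_q^*\cap\CA_r^*) \ll \Big(\sum_{q\le Q}\phi(q)\Delta_q\Big)^2
}
for $Q$ tending to infinity along a suitable sequence. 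The diagonal contribution $q=r$ is $\sum_{q\le Q}\m(\CA_q^*)=2\sum_{q\le Q}\phi(q)\Delta_q$, which is negligible compared with the right-hand side of \eqref{eq:plan-goal} once that sum diverges, so the real work lies in the off-diagonal.

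For $q\ne r$ I would bound $\m(\CA_q^*\cap\CA_r^*)$ by counting pairs of reduced fractions $(a/q,b/r)$ whose $\Delta$-neighborhoods meet. Coprimality and $q\ne r$ force $a/q\ne b/r$, so
\[
\Big|\frac{a}{q}-\frac{b}{r}\Big|\ge \frac{1}{\lcm(q,r)}=\frac{\gcd(q,r)}{qr},
\]
and an overlap therefore requires $\gcd(q,r)\le qr(\Delta_q+\Delta_r)$. Using the hypothesis $\Delta_q\le C/q^2$, a straightforward lattice-point count in a window of length $2r(\Delta_q+\Delta_r)$ leads to an inequality of the form
\[
\m(\CA_q^*\cap\CA_r^*) \;\ll\; \phi(q)\phi(r)\Delta_q\Delta_r \;+\; \textup{(gcd-correction)},
\]
whose main term, summed over $q,r\le Q$, reproduces the right-hand side of \eqref{eq:plan-goal} exactly. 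The entire argument thus reduces to absorbing the gcd-correction into the same bound.

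The heart of the matter, and the main obstacle I anticipate, is controlling this gcd-correction. I would stratify by $d=\gcd(q,r)$, writing $q=dq_1$ and $r=dr_1$ with $\gcd(q_1,r_1)=1$, and try to compare the resulting inner sum over coprime pairs $(q_1,r_1)$ with the square of $\sum_q \phi(q)\Delta_q$ via the multiplicativity of $\phi$. Crucially, Proposition \ref{prop:DS-counterexample} rules out any such bound in full generality: if $\Delta_q$ is concentrated on integers with many small prime factors, then gcd-biased overlaps can inflate the left-hand side of \eqref{eq:plan-goal} by a logarithm. The strong decay $\Delta_q\ll 1/q^2$ is precisely what forces such overlaps to be sufficiently rare, so that a careful combinatorial/sieve-theoretic bookkeeping folds the correction back into $O\bigl((\sum_{q\le Q}\phi(q)\Delta_q)^2\bigr)$ uniformly in $Q$. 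Making this last step sharp enough is where Vaaler's refinement of Erd\H os's original argument is needed and where the bulk of the technical effort lies.
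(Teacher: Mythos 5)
Your overall framework is the right one and matches the paper's: Gallagher's zero--one law (Theorem \ref{thm:gallagher}) reduces the problem to showing $\m(\CA^*)>0$, the Chung--Erd\H os/Cauchy--Schwarz second-moment device is exactly Lemma \ref{lem:CS} and Proposition \ref{prop:DS via CS}, and the overlap analysis based on the spacing $|a/q-b/r|\ge \gcd(q,r)/(qr)$ is the starting point of Lemma \ref{lem:PV}. The gap is that you stop precisely where the hypothesis $\Delta_q=O(1/q^2)$ has to do its work, and the shape you give to the remaining difficulty is not the correct one. The correction to the ``independent'' guess $\phi(q)\Delta_q\,\phi(r)\Delta_r$ is not an additive ``gcd-correction'' that can be absorbed by stratifying over $d=\gcd(q,r)$ and using multiplicativity of $\phi$: the Pollington--Vaughan bound gives a \emph{multiplicative} factor $\exp\big(\sum_{p\mid qr/\gcd(q,r)^2,\ p>M(q,r)}1/p\big)$, which can be of size a power of $\log\log$ even when $\gcd(q,r)=1$, so large gcds are not the main enemy. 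Moreover, your ``straightforward lattice-point count'' does not by itself produce the main term $\phi(q)\phi(r)\Delta_q\Delta_r$; extracting the $\phi$-weights requires the CRT/sieve computation inside Lemma \ref{lem:PV}, and that computation is exactly what generates the exponential factor you then need to control.

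The missing idea --- the actual content of the Erd\H os--Vaaler theorem as the paper presents it in \S\ref{sec:EV} --- is a rarity estimate rather than bookkeeping. Since $\Delta_q\ll 1/q^2$ forces the support of $(\Delta_q)$ to be dense (in the paper's normalization, $N\gg Q$, so the constraint $\gcd(q,r)>Q/(Nt)$ defining $\CB_t$ is essentially vacuous), one shows that the exponential factor is $O(1)$ unless $\sum_{p\mid qr/\gcd(q,r)^2,\ p>t}1/p>100$ for some large $t$, and that the number of such pairs in $[Q,2Q]^2$ is $\ll e^{-t}Q^2$: this is proved by a Chernoff/Rankin-type argument, bounding $\#\{q_1\le 2Q/d:\lambda_t(q_1)>50\}$ via Lemma \ref{lem:sieve upper bound} with $a_p=\exp(1_{p>t}\,t/p)$, giving \eqref{eq:DS for full density} and hence \eqref{eq:DS bilinear 2}. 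Summing the resulting $O(j)$-size exponentials over the dyadic scales $t_j=\exp(2^j)$ then yields the second-moment bound you want. Your proposal explicitly defers this step (``where the bulk of the technical effort lies''), so what you have is the standard reduction plus a correct diagnosis that the hypothesis must enter at the correlation stage, but not a proof: the mechanism by which $\Delta_q\ll1/q^2$ tames the Pollington--Vaughan factor is absent, and the route you sketch for it (gcd stratification with multiplicativity of $\phi$) would not succeed as stated.
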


This theorem is of course most interesting when $\sum_{q=1}^\infty \phi(q)\Delta_q=\infty$. Since $\Delta_q=O(1/q^2)$ and $\phi(q)\le q$, we find $\sum_{q\in\CS}1/q=\infty$ with $\CS=\{q:\Delta_q>0\}$. In particular, $\CS$ must be somewhat dense in $\N$. Therefore, Theorem \ref{thm:Erdos-Vaaler} has the advantage over Theorem \ref{thm:DS for normal integers} that $\CS$ can contain many irregular integers, and the disadvantage that it has to be quite dense.

\smallskip

The Duffin--Schaeffer conjecture has a natural analogue in $\R^k$ with $k\ge2$: given $\Delta_1,\Delta_2,\dots\ge0$, let $\CA^*(k)$ be the set of $\vec{x}=(x_1,\dots,x_k)\in\R^k$ for which there are infinitely many $k$-tuples $(a_1/q,\dots,a_k/q)$ of reduced fractions with  $|x_j-a_j/q|<\Delta_q$ for all $j$. Then $\CA^*(k)$ should contain almost no or almost all $\vec{x}\in\R^k$, according to whether the series $\sum_{q=1}^\infty (\phi(q)\Delta_q)^k$ converges or diverges. This was proven by Pollington and Vaughan \cite{PV}.

\begin{thm}[Pollington--Vaughan (1990)] The $k$-dimensional Duffin--Schaeffer conjecture is true for all $k\ge2$. 
\end{thm}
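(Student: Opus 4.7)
The plan is to exploit a divergent Borel--Cantelli argument, leveraging the product structure that higher dimensions provide. By periodicity we work inside $[0,1]^k$. Set
\[
 A_q^*:=[0,1]\cap \bigcup_{\substack{0\le a\le q\\ \gcd(a,q)=1}} \Bigl(\frac{a}{q}-\Delta_q,\frac{a}{q}+\Delta_q\Bigr),
\]
and observe that the $k$-dimensional analogue of $\CA_q^*$ factors as the Cartesian product $\CA_q^*(k)=(A_q^*)^k$. Assuming as in \eqref{eq:Deltaq<1/2q} that $\Delta_q\le 1/(2q)$, we have $\m(\CA_q^*(k))=(2\phi(q)\Delta_q)^k$; the convergent case is then immediate from the first Borel--Cantelli lemma.

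For the divergent case, I would first upgrade Gallagher's zero--one law to $\R^k$ by running the Cassels argument from Theorem~\ref{thm:cassels} verbatim, using that the map $\vec{x}\mapsto(\{2x_1\},\dots,\{2x_k\})$ is ergodic on $[0,1]^k$. This reduces the goal to showing $\m(\CA^*(k))>0$, for which the natural tool is the Chung--Erd\H os form of the divergent Borel--Cantelli lemma:
\[
 \m\bigl(\CA^*(k)\bigr)\ge \limsup_{Q\to\infty} \frac{\bigl(\sum_{q\le Q}\m(\CA_q^*(k))\bigr)^{\!2}}{\sum_{p,q\le Q}\m(\CA_p^*(k)\cap\CA_q^*(k))}.
\]
The task is therefore to bound the denominator by a constant multiple of the numerator, and the product structure reduces this to a one-variable estimate: since $\m(\CA_p^*(k)\cap\CA_q^*(k))=\m(A_p^*\cap A_q^*)^k$, the question is whether $\m(A_p^*\cap A_q^*)^k$, summed over pairs, is dominated by $\bigl(\sum_q(\phi(q)\Delta_q)^k\bigr)^{\!2}$.

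For the one-dimensional overlap $\m(A_p^*\cap A_q^*)$ I would argue by counting reduced pairs $(a,a')$ with $|aq-a'p|<pq(\Delta_p+\Delta_q)$ and intersecting the two corresponding intervals. This produces a bound of the shape $\m(A_p^*)\m(A_q^*)$ times an arithmetic ``badness'' factor depending on $\gcd(p,q)$; in dimension one that factor can blow up, and this is precisely what powers Duffin--Schaeffer's counterexample (Proposition~\ref{prop:DS-counterexample}). The key observation when $k\ge 2$ is that after raising to the $k$-th power the factor becomes summable when weighted against $(\phi(p)\phi(q)\Delta_p\Delta_q)^k$, because the additional exponents supply extra copies of $\phi(q)/q$, and the repeated smallness of this ratio over divisor-rich integers is controlled via Mertens-type bounds exactly as in the discussion following Theorem~\ref{thm:DS for normal integers}.

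I expect the main difficulty to lie in making the last step quantitatively sharp: one must reorganise the double sum by $d=\gcd(p,q)$, write $p=dp_1$, $q=dq_1$ with $\gcd(p_1,q_1)=1$, isolate an Euler factor in $d$, and show that the extra $k-1\ge 1$ dimensional powers exactly cancel the growth of the arithmetic factor as $d$ acquires many small prime divisors. Once this quasi-independence bound is established, the Chung--Erd\H os inequality yields $\m(\CA^*(k))>0$ whenever $\sum_q(\phi(q)\Delta_q)^k=\infty$, and the zero--one law then upgrades this to full measure, completing the proof.
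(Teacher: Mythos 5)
The paper does not actually prove this theorem; it only cites Pollington--Vaughan \cite{PV}, so your sketch has to stand on its own. Its skeleton is indeed the standard one: work in $[0,1]^k$, use the factorization $\m(\CA_q^*(k)\cap\CA_r^*(k))=\m(A_q^*\cap A_r^*)^k$, dispose of the convergence case by the first Borel--Cantelli lemma, reduce the divergence case to positive measure via a zero--one law, and get positive measure from a Chung--Erd\H{o}s/Cauchy--Schwarz bound (Lemma~\ref{lem:CS}, Proposition~\ref{prop:DS via CS}) plus a pairwise overlap estimate of the shape of Lemma~\ref{lem:PV}. But the step that constitutes the entire content of the theorem is missing: you must show, after the usual normalization on blocks $[Q,R]$, that
\[
\sum_{q\ne r}\bigl(\phi(q)\Delta_q\phi(r)\Delta_r\bigr)^k\exp\Bigl(k\sum_{\substack{p\mid qr/\gcd(q,r)^2\\ p>M(q,r)}}\frac1p\Bigr)\ \ll\ \Bigl(\sum_q\bigl(\phi(q)\Delta_q\bigr)^k\Bigr)^{2},
\]
and your justification --- that raising to the $k$-th power ``supplies extra copies of $\phi(q)/q$'' which Mertens-type bounds then control --- does not work as stated. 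The benchmark on the right carries exactly the same powers of $\phi(q)\Delta_q$, so no spare factors of $\phi(q)/q$ appear in the ratio you must bound; what does change is that the exceptional exponential factor of Lemma~\ref{lem:PV} is raised to the $k$-th power, i.e.\ it gets worse. Your heuristic thus does not isolate anything that distinguishes $k\ge2$ from $k=1$ (which is the full Duffin--Schaeffer conjecture). The genuine gain in Pollington--Vaughan's argument comes from elsewhere: one plays the bound of Lemma~\ref{lem:PV} against the trivial bound $\m(A_q^*\cap A_r^*)\le 2\min\{\phi(q)\Delta_q,\phi(r)\Delta_r\}$, whose $k$-th power is $\le 2^k(\phi(q)\Delta_q\phi(r)\Delta_r)^{k/2}$ --- leverage that simply does not exist at $k=1$ --- combined with the fact that a large exceptional factor forces $M(q,r)=2\max\{\Delta_q,\Delta_r\}\lcm[q,r]$ to be small, which severely constrains the offending pairs; a careful case analysis over the size of this factor is then required. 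None of that is in your sketch, and your own closing remark (``I expect the main difficulty to lie in making the last step quantitatively sharp'') concedes the point: that last step is the theorem.

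A secondary gap is the zero--one law. Cassels's doubling-map argument does not extend ``verbatim'' to reduced fractions: if $q$ is even, doubling turns a reduced fraction $a/q$ into one with denominator $q/2$, and $\Delta_{q/2}$ bears no relation to $\Delta_q$, so the invariance $\psi(\CA^{*(\infty)})\subseteq\CA^{*(\infty)}$ fails; this is precisely why the paper describes Gallagher's Theorem~\ref{thm:gallagher} as a ``clever adaptation'' of Cassels and refers elsewhere for its proof. You should instead quote the higher-dimensional zero--one law for reduced fractions (due to Gallagher), not derive it the way you propose.
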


Following this result, a lot of research focused on proving the Duffin--Schaeffer conjecture when the series $\sum_{q=1}^\infty\phi(q)\Delta_q$ diverges fast enough (see, e.g., \cite[Theorem 3.7(iii)]{harman}, \cite{extra-div1, extra-div2}). Aistleitner, Lachmann, Munsch, Technau and Zafeiropoulos \cite{extra-div3} proved the Duffin--Schaeffer conjecture when $\sum_{q=1}^\infty \phi(q)\Delta_q/(\log q)^\eps=\infty$ for some $\eps>0$. A report by Aistleitner \cite{extra-div4}, announced at the same time as \cite{DS-proof}, explains how to replace $(\log q)^\eps$ by $(\log\log q)^\eps$.

\section{The main ingredients of the proof of the Duffin--Schaeffer conjecture}\label{sec:proof}

\subsection{Borel--Cantelli without independence}\label{sec:Borel--Cantelli}

Recall the definition of the sets $\CA_q^*$ in \eqref{eq:dfn-Aq*}. Let us assume that $\Delta_q\le1/(2q)$ for all $q$ (cf.~\eqref{eq:Deltaq<1/2q}) so that $\m(\CA_q^*)=2\phi(q)\Delta_q\in[0,1]$, and let us also suppose that $\sum_{q=1}^\infty \phi(q)\Delta_q=\infty$. The first technical difficulty we must deal with is how to prove an analogue of the second Borel--Cantelli lemma (cf.~Theorem \ref{thm:Borel--Cantelli}(b)) without assuming that the events $\CA_q^*$ are independent. We follow an idea due to Turan, which is already present in \cite{DS}. 

By Gallagher's zero--one law, it is enough to show that $\m(\CA^*)>0$. Since $\bigcup_{q\ge Q} \CA_q^*\searrow \CA^*$, we may equivalently prove that there is some constant $c>0$ such that $\m(\bigcup_{q\ge Q} \CA_q^*)\ge c$ for all large $Q$. In order to limit the potential overlap among the sets $\CA_q^*$, we only consider an appropriate subset of them. Since $\m(\CA_q^*)=2\phi(q)\Delta_q\in[0,1]$ for all $q$, and since $\sum_{q\ge Q}\phi(q)\Delta_q=\infty$, there must exist some $R\ge Q$ such that 
\eq{\label{eq:BC-linear}
1\le \sum_{q\in[Q,R]} \m(\CA_q^*)\le 2.
}
We will only use the events $\CA_q^*$ with $q\in[Q,R]$. We trivially have the union bound
\[
\m\Big(\bigcup_{q\in[Q,R]}\CA_q^*\Big) \le \sum_{q\in[Q,R]}\m(\CA_q^*) \le 2 .
\]
If we can show that the sets $\CA_q^*$ with $q\in[Q,R]$ do not overlap too much, so that
\eq{\label{eq:BC-limited overlap}
\m\Big(\bigcup_{q\in[Q,R]}\CA_q^*\Big) \ge c \sum_{q\in[Q,R]}\m(\CA_q^*) \ge c,
}
we will be able to deduce that $\m(\bigcup_{q\in[Q,R]}\CA_q^*)\ge c$ and \emph{a fortiori} that $\m(\bigcup_{q\ge Q}\CA_q^*)\ge c$. 
As the following lemma shows, \eqref{eq:BC-limited overlap} is true under \eqref{eq:BC-linear} as long as we can control the correlations of the events $\CA_q^*$ \emph{on average}. 

\begin{lem}\label{lem:CS}
Let $E_1,\dots,E_k$ be events in the probability space $(\Omega,\CF,\P)$. We then have that
\[
\P\Big(\bigcup_{j=1}^k E_j\Big) \ge \frac{(\sum_{j=1}^k \P(E_j))^2}{\sum_{i,j=1}^k \P(E_i\cap E_j)} .
\]
\end{lem}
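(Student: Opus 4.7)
My plan is to prove the lemma by a second-moment argument, i.e.\ by applying the Cauchy--Schwarz inequality to the counting random variable
\[
N := \sum_{j=1}^k \mathbf{1}_{E_j}.
\]
The key observation is that the event $\bigcup_{j=1}^k E_j$ coincides with $\{N\ge 1\}$, so $N = N \cdot \mathbf{1}_{N\ge 1}$ pointwise.

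First, I would compute the two moments of $N$: linearity of expectation gives $\E[N] = \sum_{j=1}^k \P(E_j)$, while expanding the square and using linearity again gives $\E[N^2] = \sum_{i,j=1}^k \P(E_i \cap E_j)$. Next, I would apply Cauchy--Schwarz in the form
\[
\E[N]^2 \;=\; \E\bigl[N \cdot \mathbf{1}_{N\ge 1}\bigr]^2 \;\le\; \E[N^2]\cdot\E\bigl[\mathbf{1}_{N\ge 1}^2\bigr] \;=\; \E[N^2]\cdot \P(N\ge 1).
\]
Rearranging and substituting the formulas for $\E[N]$ and $\E[N^2]$ gives precisely the claimed bound. (The inequality is trivial if $\E[N^2]=0$, since then all the $\P(E_j)$ vanish too.)

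There is no real obstacle here: the entire content of the statement is the Cauchy--Schwarz inequality, and the only thing one has to notice is the trick of writing $N = N \cdot \mathbf{1}_{N\ge 1}$ so that the indicator of the union appears on the right-hand side. This second-moment bound is sometimes called the Paley--Zygmund inequality and is the standard substitute for the second Borel--Cantelli lemma when independence fails: combined with the quasi-linearity condition \eqref{eq:BC-linear}, it will reduce the task of proving $\m(\CA^*)>0$ to obtaining an upper bound of the form $\sum_{Q\le i,j\le R}\m(\CA_i^*\cap\CA_j^*)\ll \bigl(\sum_{Q\le q\le R}\m(\CA_q^*)\bigr)^2$, which is the main analytic challenge addressed later in the paper.
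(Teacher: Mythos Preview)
Your proof is correct and is essentially identical to the paper's own argument: both define $N=\sum_j \mathbf{1}_{E_j}$, write $N=N\cdot\mathbf{1}_{N>0}$, and apply Cauchy--Schwarz to conclude. The paper phrases the target event as $\supp(N)$ rather than $\{N\ge1\}$, but this is purely notational.
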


\begin{proof} Let $N=\sum_j 1_{E_j}$. We then have $\E[N]= \sum_j \P(E_j)$. 
	On the other hand, the Cauchy--Schwarz inequality implies that
	\[
	\E[N]^2 = \E [ 1_{N>0} \cdot N]^2 \le \P(\supp(N)) \cdot \E[N^2] .
	\]
	Since $\supp(N)=\bigcup_j E_j$ and $N^2= \sum_{i,j} 1_{E_i\cap E_j}$, the lemma follows. 
\end{proof}

The following proposition summarizes the discussion of this section. 

\begin{prop}\label{prop:DS via CS}
	Let $\Delta_1,\Delta_2,\dots\ge0$, and let $\CA_q^*$ be as in \eqref{eq:dfn-Aq*}.
	\begin{enumerate}
		\item 	If $C>0$ and $R\ge Q\ge1$ are such that
		\eq{\label{eq:DS via correlations}
		1\le \sum_{q\in [Q,R]} \m(\CA_q^*) \le 2
		\qquad\text{and}\qquad
		\sum_{Q\le q<r\le R}\m(\CA_q^*\cap\CA_r^*)\le C,
		}
		then $\m(\bigcup_{q\in[Q,R]}\CA_q^*)\ge 1/(2+2C)$.
		\item If there are infinitely many disjoint intervals $[Q,R]$ satisfying \eqref{eq:DS via correlations} with the same constant $C>0$, 
		then $\m(\limsup_{q\to\infty}\CA_q^*)=1$.
		\end{enumerate}
\end{prop}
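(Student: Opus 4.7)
Part (a) is essentially a direct application of Lemma \ref{lem:CS} to the events $\CA_q^*$ with $q\in[Q,R]$, viewed inside the probability space $([0,1],\text{Borel},\m)$. The numerator in the conclusion of Lemma \ref{lem:CS} is $(\sum_{q\in[Q,R]}\m(\CA_q^*))^2 \ge 1$ by the first hypothesis of \eqref{eq:DS via correlations}. For the denominator, I would split the double sum as
\[
\sum_{q,r\in[Q,R]} \m(\CA_q^*\cap \CA_r^*) = \sum_{q\in[Q,R]}\m(\CA_q^*) + 2\sum_{Q\le q<r\le R}\m(\CA_q^*\cap \CA_r^*) \le 2 + 2C,
\]
using both hypotheses of \eqref{eq:DS via correlations}. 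Lemma \ref{lem:CS} then gives $\m(\bigcup_{q\in[Q,R]}\CA_q^*)\ge 1/(2+2C)$, as claimed. There is essentially no obstacle here; the only thing to check is the off-diagonal/diagonal bookkeeping.

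For part (b), I would combine part (a) with Gallagher's zero--one law (Theorem \ref{thm:gallagher}). Let $[Q_n,R_n]$, $n=1,2,\dots$, be the disjoint intervals satisfying \eqref{eq:DS via correlations} with the common constant $C$, and set $B_n:=\bigcup_{q\in[Q_n,R_n]}\CA_q^*$. Because the intervals are disjoint subsets of $\N$, we must have $Q_n\to\infty$, so for every $N$ there exists $n$ with $Q_n\ge N$, which yields $B_n\subseteq \bigcup_{q\ge N}\CA_q^*$. By part (a), $\m(B_n)\ge 1/(2+2C)$, and hence
\[
\m\Big(\bigcup_{q\ge N}\CA_q^*\Big) \ge \frac{1}{2+2C} \qquad \text{for every } N\ge 1.
\]

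Now the sets $\bigcup_{q\ge N}\CA_q^*$ are decreasing in $N$, with intersection $\limsup_{q\to\infty}\CA_q^* = \CA^*$, so continuity of measure from above gives $\m(\CA^*) \ge 1/(2+2C) > 0$. Theorem \ref{thm:gallagher} forces $\m(\CA^*)\in\{0,1\}$, and therefore $\m(\CA^*)=1$, completing the proof. The only potentially subtle point is ensuring $Q_n\to\infty$, but this is automatic from the disjointness of the intervals in $\N$; no genuine obstacle arises in this part either, since all the real analytic work has been absorbed into Lemma \ref{lem:CS} and Gallagher's theorem.
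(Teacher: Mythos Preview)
Your proposal is correct and follows essentially the same approach as the paper. The paper presents Proposition~\ref{prop:DS via CS} as a summary of the discussion in \S\ref{sec:Borel--Cantelli}, which is precisely the argument you give: apply Lemma~\ref{lem:CS} with the diagonal/off-diagonal split for part~(a), and for part~(b) combine part~(a) with the decreasing-intersection identity $\bigcup_{q\ge N}\CA_q^*\searrow \CA^*$ and Gallagher's zero--one law (Theorem~\ref{thm:gallagher}).
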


%

%
%

\subsection{A bound on the pairwise correlations}

As per Proposition \ref{prop:DS via CS}, we need to control the correlations of the events $\CA_q^*$. To this end, we have a lemma of Pollington--Vaughan \cite{PV} (see also \cite{erdos,vaaler}).

\begin{lem} \label{lem:PV} 
	Let $q,r$ be two distinct integers $\ge2$, let $\Delta_q,\Delta_r\ge0$, let $\CA_q^*, \CA_r^*$ be as in \eqref{eq:dfn-Aq*}, and let  $M(q,r)=2\max\{\Delta_q,\Delta_r\}\lcm[q,r]$. If $M(q,r)\le 1$, then $\CA_q^*\cap\CA_r^*=\emptyset$. Otherwise,
	\[
	\m(\CA_q^*\cap\CA_r^*)\ll \phi(q)\Delta_q \cdot \phi(r)\Delta_r \cdot  \exp\Big(\sum_{\substack{ p|qr/\gcd(q,r)^2 \\  p>M(q,r) }} \frac{1}{p}\Big) .
	\]
\end{lem}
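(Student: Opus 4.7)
The plan is to first dispose of the trivial case $M(q,r) \leq 1$, then bound the measure of the intersection by a counting problem, and finally execute a sieve over primes dividing $L := \lcm[q,r]$. Throughout, write $d := \gcd(q,r)$ and $q = d q_1$, $r = d r_1$ with $(q_1, r_1) = 1$, so that $L = d q_1 r_1$.

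For the case $M(q,r) \leq 1$: if $x \in \CA_q^* \cap \CA_r^*$ were realized by reduced fractions $a/q$ and $b/r$, the triangle inequality would give
\[
|ar - bq| < (\Delta_q + \Delta_r)\,qr \leq 2\max\{\Delta_q, \Delta_r\}\,qr = M(q,r)\cdot d \leq d.
\]
Since $d$ divides the integer $ar - bq$, this forces $ar = bq$, so $a/q = b/r$ as reduced fractions; uniqueness of the reduced representative then forces $q = r$, contradicting the hypothesis.

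In the main case, $\CA_q^* \cap \CA_r^*$ is a union of intervals indexed by pairs $(a,b)$ with $a \in [0,q)$, $b \in [0,r)$, $(a,q) = (b,r) = 1$, and $|ar - bq| \leq M(q,r)\cdot d$; each such interval has length at most $2\min\{\Delta_q, \Delta_r\}$, so
\[
\m(\CA_q^* \cap \CA_r^*) \leq 2\min\{\Delta_q, \Delta_r\} \cdot N,
\]
where $N$ counts the valid pairs. I would parametrize them by the nonzero integer $t := (ar - bq)/d$, which satisfies $1 \leq |t| \leq M(q,r)$. For each such $t$, the equation $a r_1 - b q_1 = t$ (using $(q_1, r_1) = 1$) has solutions forming a one-parameter family $(a,b) = (a_0 + k q_1,\, b_0 + k r_1)$, and the box constraints $a \in [0,q)$, $b \in [0,r)$ admit at most $d$ integer values of $k$. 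A direct computation yields that $(a, q_1) = 1$ is equivalent to $(t, q_1) = 1$ and, symmetrically, $(b, r_1) = 1$ iff $(t, r_1) = 1$; for each prime $p \mid d$, the conditions $p \nmid a$ and $p \nmid b$ exclude at most two bad residues of $k$ modulo $p$ (collapsing to one when $p \mid t$).

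The heart of the argument is then a sieve organized around the threshold $M(q,r)$. Primes $p \mid L$ with $p \leq M(q,r)$ deliver the ``main'' local density, so that summing over admissible $t$ with $1 \leq |t| \leq M(q,r)$ produces a main term of size $\ll M(q,r)\cdot d\cdot \phi(q)\phi(r)/(qr)$. For each prime $p \mid L$ with $p > M(q,r)$, the coprimality condition at $p$ is loose -- either $(t,p) = 1$ is automatic (if $p \mid q_1 r_1$, since $|t| \leq M(q,r) < p$) or the case $p \mid t$ does not arise (if $p \mid d$) -- so each such prime yields a correction of at most $(1 - 1/p)^{-1}$ relative to the main term. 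Combining with $M(q,r)\cdot d = 2\max\{\Delta_q, \Delta_r\}\cdot qr$ and the elementary inequality $\prod_{p > M,\, p \mid L}(1 - 1/p)^{-1} \ll \exp\bigl(\sum_{p > M,\, p \mid L} 1/p\bigr)$ then yields the claimed bound. The main obstacle is the clean execution of the sieve at primes $p \mid d$, where enforcing $p \nmid a$ and $p \nmid b$ under the coupling $ar_1 - bq_1 \equiv t \pmod{p}$ requires handling the case $p \mid t$ separately and compensating for the loss there by the sparsity of such $t$; one must also verify that $M(q,r)$ is precisely the threshold separating ``saturated'' primes, whose local factor is absorbed into the main term, from ``free'' primes, whose collective contribution produces the exponential correction.
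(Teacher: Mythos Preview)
Your proposal is correct and follows essentially the same route as the paper: reduce to counting pairs $(a,b)$, parametrize by $t=(ar-bq)/d$ (the paper's $m$), analyze the residual coprimality conditions on the one-parameter family via CRT, and sieve. The only cosmetic difference is that the paper bounds $1-2/p\le(1-1/p)^2$ to extract a factor $|m|/\phi(|m|)$ and then invokes the black-box multiplicative-function average (Lemma~\ref{lem:sieve upper bound}) for $\sum_{|m|\le M,\ (m,q_1r_1)=1}|m|/\phi(|m|)$, whereas you describe the outcome of that sum directly as ``primes $p\le M$ saturate, primes $p>M$ each cost $(1-1/p)^{-1}$''; these are the same computation.
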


\begin{proof} 
Let $\Delta=\max\{\Delta_q,\Delta_r\}$, $\delta=\min\{\Delta_q,\Delta_r\}$ and $M=M(q,r)$. 
The intervals $I_a=(\frac{a}{q}-\Delta_q,\frac{a}{q}+\Delta_q)$ and $J_b=(\frac{b}{r}-\Delta_r,\frac{b}{r}+\Delta_r)$ intersect only if $2\Delta>|\frac{a}{q}-\frac{b}{r}|$. Since the right-hand side is $\ge 1/\lcm[q,r]$ when $\gcd(a,q)=\gcd(b,r)=1$, we infer that $\CA_q^*\cap\CA_r^*=\emptyset$ if $M\le 1$. 

Now, assume that $M>1$. Since $\m(I_a\cap J_b)\le2\delta$ for all $a,b$, we have
\[
	\m(\CA_q^*\cap \CA_r^*) \le 2\delta \cdot  \#\bigg\{ \begin{array}{ll}1\le a\le q, &\gcd(a,q)=1 \\ 1\le b\le r, &\gcd(b,r)=1 \end{array} 
	:\ \Big|\frac{a}{q}-\frac{b}{r}\Big|<2\Delta\bigg\} .
\]
Let $a/q-b/r=m/\lcm[q,r]$. Then $1\le|m|\le M$ and $\gcd(m,q_1r_1)=1$, where $q_1=q/d$ and $r_1=r/d$ with $d=\gcd(q,r)$. For each such $m$, a straightforward application of the Chinese Remainder Theorem gives that the number of admissible pairs $(a,b)$ is 
%
\[
\le d \prod_{p|\gcd(d,q_1r_1m)}\Big(1-\frac{1}{p}\Big)
\prod_{p|d,\ p\nmid q_1r_1m}\Big(1-\frac{2}{p}\Big)  
\le d \frac{\prod_{p|d}(1-\frac{1}{p})^2}{\prod_{p|\gcd(d,q_1r_1)}(1-\frac{1}{p})} \cdot \frac{|m|}{\phi(|m|)} ,
\]
where we used that $1-2/p\le(1-1/p)^2$. We then sum this inequality over $m$ and use Lemma \ref{lem:sieve upper bound} below to complete the proof. (For full details, see \cite{PV} or \cite[Lemma~2.8]{harman}.)
%
%
\end{proof}

\begin{lem}\label{lem:sieve upper bound} Fix $C\ge1$, and let $(a_p)_{p\ \text{prime}}$ be a sequence taking values in $[0,C]$. Then
		\[
		\sum_{n\le x} \prod_{p|n} a_p  \ll_C x \exp\Big(\sum_{p\le x} \frac{a_p-1}{p} \Big) \qquad\text{for all}\ x\ge1.
		\]
\end{lem}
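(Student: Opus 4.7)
The plan is to view the summand $f(n) := \prod_{p \mid n} a_p$ as a non-negative multiplicative function with $f(p^k) = a_p \in [0, C]$ for every prime power, and then to invoke a Halberstam--Richert-type upper bound for such functions. Our hypotheses trivially imply $\sum_{p \le x} f(p) \log p \ll_C x$ (by Chebyshev), as well as $f(p^k) \le C$, so the standard sieve-theoretic upper bound for sums of non-negative multiplicative functions — as developed in \cite{dk-book} — applies and yields
\[
\sum_{n \le x} f(n) \ll_C \frac{x}{\log x}\prod_{p \le x}\left(1 + \sum_{k \ge 1}\frac{f(p^k)}{p^k}\right).
\]

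The remainder of the argument is purely algebraic. Since $f(p^k) = a_p$ for every $k \ge 1$, the local Euler factor collapses to $1 + a_p/(p - 1)$. Absorbing the prefactor $1/\log x$ into the product via Mertens' theorem $\prod_{p \le x}(1 - 1/p)^{-1} \asymp \log x$, and using the elementary identity
\[
\left(1 - \frac{1}{p}\right)\left(1 + \frac{a_p}{p - 1}\right) = 1 + \frac{a_p - 1}{p},
\]
one obtains $\sum_{n \le x} f(n) \ll_C x \prod_{p \le x}\bigl(1 + (a_p - 1)/p\bigr)$. Applying $1 + y \le e^y$ term-by-term (valid since $(a_p - 1)/p \ge -1/p > -1$) then bounds this product by $\exp\bigl(\sum_{p \le x}(a_p - 1)/p\bigr)$, which is the desired conclusion.

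The main obstacle, were one to attempt a more elementary attack, is precisely the saving of $1/\log x$ over the naive Rankin-type estimate $\sum_{n \le x} f(n) \ll_C x \prod_{p \le x}(1 + a_p/p)$. Without that saving one cannot recover the Mertens factor $\prod_p (1 - 1/p)$, and hence cannot convert the Euler product in $a_p/p$ into the sharper one in $(a_p - 1)/p$ that appears in the statement. Providing this saving is exactly the content of the Halberstam--Richert machinery, so the cleanest presentation simply cites it; a self-contained proof would proceed by evaluating $\sum_{n \le x} f(n) \log n$ via the identity $\log = \mathbf{1} * \Lambda$, controlling the resulting convolution using the near-multiplicativity $f(p^k m) \le (a_p + \mathbf{1}_{p \mid m}) f(m)$, and then bootstrapping via partial summation.
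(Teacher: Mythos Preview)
Your proof is correct and follows essentially the same route as the paper, which simply cites Theorem~14.2 of \cite{dk-book}. You have merely unpacked that citation: you invoke the Halberstam--Richert-type bound from the same reference and then carry out the short Euler-product manipulation (via Mertens and the identity $(1-1/p)(1+a_p/(p-1))=1+(a_p-1)/p$) that reduces it to the stated form.
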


\begin{proof} See Theorem 14.2 in \cite{dk-book}.
\end{proof}

\subsection{Generalizing the Erd\H os--Vaaler argument}\label{sec:EV}

The next step is to study averages of $\exp(\sum_{p|qr/\gcd(q,r)^2,\ p>M(q,r)}1/p)$. This gets a bit too technical in general, so we focus on the following special case:

\begin{thm}\label{thm:DS-special-case} Let $Q\ge N\ge2$, and let $\CS\subseteq\{Q\le q\le 2Q:q\ \text{square-free}\}$ be such that 
	\eq{\label{eq:DS-total weight}
	N/2\le \sum_{q\in\CS} \frac{\phi(q)}{q} \le N.
	}
	We then have
	\eq{\label{eq:DS bilinear}
	 \sum_{q,r\in\CS } \frac{\phi(q)\phi(r)}{qr} \exp\Big(\sum_{\substack{p|qr/\gcd(q,r)^2 \\ p>Q/[N\gcd(q,r)]}}  \frac{1}{p} \Big) \ll N^2.
	}
	In particular, if $\CA_q^*$ is as in \eqref{eq:dfn-Aq*} with $\Delta_q=1/(qN)$, then $\m(\bigcup_{q\in\CS}\CA_q^*)\gg1$.
\end{thm}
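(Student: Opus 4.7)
The second assertion of the theorem follows quickly from the bilinear bound~\eqref{eq:DS bilinear} combined with Proposition~\ref{prop:DS via CS} and Lemma~\ref{lem:PV}. Setting $\Delta_q=1/(qN)$ gives $\m(\CA_q^*)=2\phi(q)/(qN)$, and the hypothesis~\eqref{eq:DS-total weight} yields $\sum_{q\in\CS}\m(\CA_q^*)\in[1,2]$. For distinct $q,r\in\CS\subseteq[Q,2Q]$, one has $\lcm[q,r]=qr/\gcd(q,r)$ and hence $M(q,r)=2\max\{\Delta_q,\Delta_r\}\lcm[q,r]\asymp Q/[N\gcd(q,r)]$, so Lemma~\ref{lem:PV} bounds $\m(\CA_q^*\cap\CA_r^*)\ll \phi(q)\phi(r)/(qrN^2)$ times an exponential whose exponent essentially matches the one in~\eqref{eq:DS bilinear}. (A small additional calculation is needed to pass from the condition $p|qr/\gcd(q,r)$ in Lemma~\ref{lem:PV} to $p|qr/\gcd(q,r)^2$; the discrepancy comes from primes $p|\gcd(q,r)$ with $p>M(q,r)$, which is routine to control using that $q,r$ are squarefree.) Summing over pairs and applying~\eqref{eq:DS bilinear} gives $\sum_{q\ne r\in\CS}\m(\CA_q^*\cap\CA_r^*)\ll 1$, and Proposition~\ref{prop:DS via CS}(a) concludes.

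For the bilinear bound~\eqref{eq:DS bilinear}, the plan is to parametrize pairs $(q,r)$ by $d=\gcd(q,r)$. Since $q,r$ are squarefree, I write $q=dq_1$ and $r=dr_1$ with $q_1,r_1,d$ pairwise coprime. Then $\phi(q)\phi(r)/(qr)=(\phi(d)/d)^2\cdot\phi(q_1)\phi(r_1)/(q_1r_1)$, and the coprimality of $q_1,r_1$ splits the exponential as $\prod_{p|q_1,\,p>T_d}e^{1/p}\cdot\prod_{p|r_1,\,p>T_d}e^{1/p}$, where $T_d=Q/(Nd)$. Dropping the constraint $\gcd(q_1,r_1)=1$ (all summands being non-negative) majorizes the bilinear sum by
\[
\sum_d \Big(\frac{\phi(d)}{d}\Big)^2 U_d^2,\qquad U_d=\sum_{\substack{dm\in\CS\\(m,d)=1}}\frac{\phi(m)}{m}\prod_{\substack{p|m\\p>T_d}}e^{1/p}.
\]
For fixed $d$, I would estimate $U_d$ via Lemma~\ref{lem:sieve upper bound} applied to the multiplicative weight $a_p=(1-1/p)e^{1/p\cdot 1_{p>T_d}}$ for $p\nmid d$ and $a_p=0$ for $p|d$. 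The identity $(1-1/p)e^{1/p}=1+O(1/p^2)$ ensures that the main contribution to $\sum_{p\le 2Q/d}(a_p-1)/p$ comes from $p|d$, and Mertens then yields $U_d\ll (Q/d)\cdot\phi(d)/d$.

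The main obstacle is that this per-$d$ estimate, plugged into $\sum_d(\phi(d)/d)^2 U_d^2$, gives only $\ll Q^2$—too weak. The hypothesis $\sum_{q\in\CS}\phi(q)/q\le N$ must enter decisively. My plan for the fix is to \emph{retain} the $\CS$-constraint throughout: rewrite $U_d=(d/\phi(d))\sum_{q\in\CS,\,d|q}(\phi(q)/q)\prod_{p|q/d,\,p>T_d}e^{1/p}$, expand the square in $d$, and interchange the order of summation to obtain
\[
\sum_d\Big(\frac{\phi(d)}{d}\Big)^2 U_d^2\;\le\;\sum_{q,r\in\CS}\frac{\phi(q)\phi(r)}{qr}\sum_{d|\gcd(q,r)}\prod_{\substack{p|q/d\\p>T_d}}e^{1/p}\prod_{\substack{p|r/d\\p>T_d}}e^{1/p}.
\]
It then remains to show that the inner sum over $d$ is $O(1)$ on $(\phi(q)\phi(r)/(qr))$-weighted average. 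Heuristically, each prime $p$ in the exponent requires $p>T_d=Q/(Nd)$, restricting $d$ to a narrow range, so each prime contributes only $O(1/p^2)$ after summation over admissible $d$; the aggregate is $O(1)$ and the outer double sum is $\ll (\sum_{q\in\CS}\phi(q)/q)^2\ll N^2$. Making this averaging argument rigorous while handling the $d$-dependence of the threshold $T_d$ is the technical crux of the proof.
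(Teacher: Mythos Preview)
Your reduction of the second assertion to the bilinear bound~\eqref{eq:DS bilinear} is fine and matches the paper's remark.

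The proof of~\eqref{eq:DS bilinear} itself, however, has a genuine gap. The step where you drop the coprimality constraint $\gcd(q_1,r_1)=1$ is fatal: after dropping it, a fixed pair $(q,r)\in\CS\times\CS$ contributes once for \emph{every} divisor $d\mid\gcd(q,r)$, not just for $d=\gcd(q,r)$. Since each exponential factor is $\ge1$, your rewritten sum is bounded below by
\[
\sum_{q,r\in\CS}\frac{\phi(q)\phi(r)}{qr}\,\tau(\gcd(q,r)),
\]
and this need not be $\ll N^2$. For a concrete obstruction, take $d_0=\prod_{p\le y}p$ and let $\CS$ be the set of square-free multiples of $d_0$ in $[Q,2Q]$ (with $Q$, $y$ chosen so that~\eqref{eq:DS-total weight} holds). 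Then $d_0\mid\gcd(q,r)$ for all $q,r\in\CS$, so $\tau(\gcd(q,r))\ge\tau(d_0)=2^{\pi(y)}$, and the sum above is $\ge 2^{\pi(y)}\cdot(N/2)^2$, which is unbounded in $y$. Your final heuristic (``each prime contributes $O(1/p^2)$ after summing over admissible $d$'') overlooks that the inner sum over $d$ is already $\ge\tau(\gcd(q,r))$ before any prime in the exponent appears.

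More broadly, a direct analytic attack (parametrize by $d$, apply Lemma~\ref{lem:sieve upper bound}) cannot succeed here because $\CS$ is arbitrary: the sieve lemma controls sums over \emph{all} integers in a range, not over an unknown sparse subset. This is exactly the ``Model Problem'' discussed in \S\ref{sec:EV}, and its resolution is the substance of the paper. The actual proof first reduces~\eqref{eq:DS bilinear} to the estimate $\mu(\CB_t)\ll N^2/t$ of~\eqref{eq:DS bilinear 2}, and then establishes that via an iterative compression on bipartite \emph{GCD graphs}: at each step one selects a prime $p$ and passes to a subgraph determined by whether $p$ divides the vertices, choosing so that a carefully designed \emph{quality} functional (Lemma~\ref{lem:quality-increment}) never decreases. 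The weights $\phi(q)/q$ are essential to this quality increment, and the terminal graph has a fixed large common divisor on which the Erd\H os--Vaaler argument can finally be run. Your approach does not engage with this structure-versus-randomness dichotomy, which is where the real difficulty lies.
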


\begin{rem*} To see the last assertion, recall the notation $M(q,r)=2\max\{\Delta_q,\Delta_r\}\lcm[q,r]$ from Lemma \ref{lem:PV}. By the assumptions of the theorem, we have $M(q,r)\ge 2Q/[N\gcd(q,r)]$ for $q,r\in\CS$. Hence, if \eqref{eq:DS bilinear} holds, then $\sum_{q,r\in\CS}\m(\CA_q^*\cap\CA_r^*)\ll 1$ by Lemma \ref{lem:PV}. We may then apply Proposition \ref{prop:DS via CS} to deduce that $\m(\bigcup_{q\in\CS}\CA_q^*)\gg1$.
\end{rem*}

When $N\gg Q$, Theorem \ref{thm:DS-special-case} follows from the work of  Erd\H os and Vaaler (Theorem \ref{thm:Erdos-Vaaler}), but when $N=o(Q)$ it was not known prior to \cite{DS-proof} in this generality. The proof begins by adapting the Erd\H os--Vaaler argument to this more general set-up.

First, we must control the sum over primes in \eqref{eq:DS bilinear}. Using \eqref{eq:not-too-many-small-primes} turns out to be too crude, so we modify our approach. Let $t_j=\exp(2^j)$ and $j_0$ be such that $\sum_{t<p\le t^2}1/p\le 1$ for $t\ge t_{j_0}$ ($j_0$ exists by Mertens' theorems \cite[Theorem 3.4]{dk-book}.) Moreover, let 
\[
\CL(q,r)=\sum_{\substack{p|qr/\gcd(q,r)^2 \\ p>Q/[N\gcd(q,r)]}}\frac{1}{p} ,
\quad \lambda_t(q) = \sum_{\substack{p|q \\ p>t}} \frac{1}{p},\quad
L_t(q,r)= \sum_{\substack{p|qr/\gcd(q,r)^2 \\ p>t}} \frac{1}{p} .
\]
If $L_{t_{j_0}}(q,r)\le 101$, then obviously $\CL(q,r)\ll1$. Otherwise, there is an integer $j\ge j_0$ such that $L_{t_j}(q,r)>101\ge L_{t_{j+1}}(q,r)$. Since $j\ge j_0$, we then also have $L_{t_{j+1}}(q,r)>100$. Now, note that if $Q/[N\gcd(q,r)]\ge t_{j+1}$, then $\CL(q,r)\le L_{t_{j+1}}(q,r)\le 101$.

To sum up, $\CL(q,r)\ll1$, unless $(q,r)\in\CB_{t_{j+1}}$ for some $j\ge j_0$, where
\[
\CB_t := \big\{ (q,r)\in\CS\times \CS : \gcd(q,r)>Q/(Nt) ,\  L_t(q,r)>100\big\} .
\]
We study the contribution of such pairs to the left-hand side of \eqref{eq:DS bilinear}: if $(q,r)\in\CB_{t_{j+1}}$, then
\[
\CL(q,r)\le 101+  \sum_{p\le t_{j+1}} \frac{1}{p} \le \log\log t_{j+1}+O(1) = j\log2 +O(1)
\]
by Mertens' estimate. In conclusion, Theorem \ref{thm:DS-special-case} will follow if we can show that
\eq{\label{eq:DS bilinear 2}
	\sum_{(q,r)\in \CB_t} \frac{\phi(q)\phi(r)}{qr} \ll \frac{N^2}{t}  \qquad\text{for all}\ t\ge t_{j_0+1}.
}

Now, let us consider the special case when $N\gg Q$, which corresponds to the Erd\H os--Vaaler theorem. The inequality $\gcd(q,r)> Q/(Nt)$ is then basically trivially, so we must prove \eqref{eq:DS bilinear 2} by exploiting the condition $L_t(q,r)>100$. Indeed, writing $d=\gcd(q,r)$, $q=dq_1$ and $r=dr_1$, we find that $\lambda_t(q_1)>50$ or $\lambda_t(r_1)>50$. By symmetry, we have
\[
\#\CB_t \le 2\sum_{d\le 2Q} \#\{r_1\le 2Q/d\} \cdot \#\{q_1 \le 2Q/d:  \lambda_t(q_1)>50\}  .
\]
The number of $r_1$'s is of course $\le 2Q/d$. Moreover, using Chernoff's inequality and Lemma \ref{lem:sieve upper bound} with $a_p=\exp(1_{p>t}\cdot t/p)$, we find that
\[
\#\{q_1 \le 2Q/d:  \lambda_t(q_1)>50\} \le \sum_{q_1\le 2Q/d} \exp\big(-50t + t\lambda_t(p)\big) \ll e^{-50t}  Q/d .
\]
Putting everything together, we conclude that 
\eq{\label{eq:DS for full density}
\#\CB_t \ll  e^{-t} Q^2  \qquad\text{for all}\ t\ge 1.
}
In particular, \eqref{eq:DS bilinear 2} holds, thus proving Theorem \ref{thm:DS-special-case} when $N\gg Q$.

On the other hand, if $N=o(Q)$, the condition that $\gcd(q,r)> Q/(Nt)$ for all $(q,r)\in\CB_t$ is non-trivial and we must understand it and exploit it to prove Theorem \ref{thm:DS-special-case}. Indeed, if we treat the weights $\phi(q)/q$ as roughly constant in \eqref{eq:DS-total weight}, we see that $\CS$ contains about $N$ integers from $[Q,2Q]$, i.e., it is a rather sparse set. On the other hand, if $t$ is not too large, then \eqref{eq:DS for full density} gives no savings compared to the trivial upper bound $\#\CB_t\le \#\CS^2\approx  N^2$.

Since the condition that $L_t(q,r)>100$ is insufficient, let us ignore it temporarily and focus on the condition that $\gcd(q,r)> Q/(Nt)$ for all $(q,r)\in\CB_t$. There is an obvious way in which this condition can be satisfied for many pairs $(q,r)\in\CS\times \CS$: if there is some \emph{fixed} integer $d>Q/(Nt)$ that divides a large proportion of integers in $\CS$. Notice that the number of total multiples of $d$ in $[Q,2Q]$ is about $Q/d<t\cdot N$, which is compatible with \eqref{eq:DS-total weight}. We thus arrive at the following key question:

\begin{model problem}
Let $D\ge1$ and $\delta\in(0,1]$, and let $\CS\subseteq[Q,2Q]\cap \Z$ be a set of $\gg \delta Q/D$ elements such that there are $\ge \delta \#\CS^2$ pairs $(q,r)\in\CS\times\CS$ with $\gcd(q,r)>D$. Must there be an integer $d>D$ that divides $\gg \delta^{100}Q/D$ elements of $\CS$?
\end{model problem}

It turns out that the answer to the Model Problem as stated is no. However, a technical variant of it is true, that takes into account the weights $\phi(q)/q$ in \eqref{eq:DS-total weight} and \eqref{eq:DS bilinear 2}, and that is asymmetric in $q$ and $r$. We shall explain this in the next section.

For now, let us assume that the Model Problem as stated has an affirmative answer, and let us see how this yields Theorem \ref{thm:DS-special-case}. Suppose \eqref{eq:DS bilinear 2} fails for some $t$. By the Model Problem, there must exist an integer $d>Q/(Nt)$ dividing $\gg t^{-100}\#\CS$ members of $\CS$. We might then also expect that $\#\CB_t\gg t^{-200} \#\{(dm,dn)\in \CB_t:m,n\ge1\}$. But note that if $(q,r)=(dm,dn)\in\CB_t$, then $m,n\le 2Q/d< 2tN$ and $qr/\gcd(q,r)^2=mn/\gcd(m,n)^2$. In particular, $L_t(m,n)>100$, so the argument leading to \eqref{eq:DS for full density} implies that the number of $(dm,dn)\in\CB_t$ is $\ll e^{-t} t^2 N^2$. Hence, $\CB_t\ll  e^{-t}t^{202} N^2 \ll N^2/t$, as needed.

\subsection{An iterative compression algorithm}

To attack the Model Problem, we view it as a question in graph theory: consider the graph $G$, with vertex set $\CS$ and edge set $\CB_t$. If the edge density of $G$ is $\ge 1/t$, must there exist a dense subgraph $G'$ all of whose vertices are divisible by an integer $>Q/(Nt)$?

To locate this ``structured'' subgraph $G'$, we use an iterative ``compression'' argument, roughly inspired by the papers of Erd\H os-Ko-Rado \cite{erdoskorado} and Dyson \cite{dyson}. With each iteration, we pass to a smaller set of vertices, where we have additional information about which primes divide them. Of course, we must ensure that we end up with a sizeable graph. We do this by judiciously choosing the new graph at each step so that it has at least as many edges as what the qualitative parameters of the old graph might naively suggest. This way the new graph will have improved ``structure'' \emph{and} ``quality''. When the algorithm terminates, we will end up with a fully structured subset of $\CS$, where we know that all large GCDs are due to a large fixed common factor. This will then allow us to exploit the condition that  $L_t(q,r)>100$ for all edges $(q,r)$. Importantly, our algorithm will also control the set $\CB_t$ in terms of the terminal edge set. Hence the savings from  the condition $L_t(q,r)>100$ in the terminal graph will be transferred to $\CB_t$, establishing \eqref{eq:DS bilinear 2}.

One technical complication is that the iterative algorithm necessitates to view $G$ as a bipartite graph. In addition, it is important to use the weights $\phi(q)/q$. We thus set
\[
\mu(\CV)=\sum_{v\in\CV} \frac{\phi(v)}{v}
\quad\text{for}\quad \CV\subset\N;
\qquad 
\mu(\CE)=\sum_{(v,w)\in\CE} \frac{\phi(v)\phi(w)}{vw}
\quad\text{for}\quad\CE\subset\N^2.
\] 

\smallskip

Let us now explain the algorithm in more detail. We set $\CV_0=\CW_0=\CS$ and construct two decreasing sequences of sets $\CV_0\supseteq\CV_1\supseteq\CV_2\supseteq\cdots$ and $\CW_0\supseteq\CW_1\supseteq\CW_2\supseteq\cdots$, as well as a sequence of distinct primes $p_1,p_2,\dots$ such that either $p_j$ divides all elements of $\CV_j$, or it is coprime to all elements of $\CV_j$ (and similarly with $\CW_j$). Since $\CS$ consists solely of square-free integers, there are integers $a_j,b_j$ dividing $p_1\cdots p_j$, and such that $\gcd(v,p_1\cdots p_j)=a_j$ and $\gcd(w,p_1\cdots p_j)=b_j$ for all $v\in\CV_j$ and all $w\in\CW_j$.

Assume we have constructed $\CV_i,\CW_i,p_i$ as above for $i=1,\dots,j$. Let $\CE_i=\CB_t\cap(\CV_i\times\CW_i)$ be the edge sets. We then pick a new prime $p_{j+1}$ that occurs as common factor of $\gcd(v,w)$ for at least one edge $(v,w)\in \CE_j$. (If there is no such prime, the algorithm terminates.) Then, we pick $\CV_{j+1}$ to be either $\CV_j^{(1)}:=\{v\in \CV_j:p_{j+1}|v\}$ or $\CV_j^{(0)}:=\{v\in \CV_j:p_{j+1}\nmid v\}$ (and similarly with $\CW_{j+1}$). Deciding how to make this choice is the most crucial part of the proof and we will analyze it in more detail below. At any rate, it is clear that after a finite number of steps, this process will terminate. We will thus arrive at sets of vertices $\CV_J$ and $\CW_J$ where $a=a_J$ divides all elements of $\CV_J$, $b=b_J$ divides all elements of $\CW_J$, and $\gcd(v,w)=\gcd(a,b)$ for all edges $(v,w)\in\CE_J\subseteq \CB_t$. In particular, $\gcd(a,b)>Q/(Nt)$, as long as $\CE_J\neq\emptyset$. We have thus found our fixed large common divisor, so that we can use the Erd\H os--Vaaler argument as in \S\ref{sec:EV} to control the size of $\CE_J$. If we can ensure that $\CE_J$ is a large enough portion of $\CE_0=\CB_t$, we will have completed the proof.

Let us now explain how to make the choice of which subgraph to focus on each time. Let $G_j=(\CV_j,\CW_j,\CE_j)$ be the bipartite graph at the $j$-iteration. Because we will use an unbounded number of iterations, it is important to ensure that $G_{j+1}$ has more edges than ``what the qualitative parameters of $G_j$ would typically predict''. One way to assign meaning to this vague phrase is to use the edge density $\#\CE_j/(\#\CV_j \#\CW_j )$. Actually, in our case, we should use the weighted density 
\[
\delta_j =\frac{\mu(\CE_j)}{\mu(\CV_j)\mu(\CW_j)} .
\]
Naively, we might guess that $\mu(\CE_{j+1})\approx \delta_j \mu(\CV_{j+1})\mu(\CW_{j+1})$, meaning that $\delta_{j+1}\approx \delta_j$. So we might try to choose $G_{j+1}$ so that $\delta_{j+1}\ge \delta_j$. This would be analogous to Roth's ``density increment'' strategy \cite{roth1,roth2}. Unfortunately, such an argument loses all control over the size of $\CE_j$, so we cannot use information on $\CE_J$ to control $\CE_0=\CB_t$ (which is our end goal).

In a completely different direction, we can use the special GCD structure of our graphs to come up with another ``measure of quality'' of our new graph compared to the old one. Recall the integers $a_{j+1}$ and $b_{j+1}$. We then have
\eq{\label{eq:Ej-ub}
\#\CE_j
	\le \#\Big\{ m\le \frac{2Q}{a_j},\  n\le \frac{2Q}{b_j } :  \gcd(m,n)> \frac{Q/(Nt)}{ \gcd(a_j,b_j)  } \Big\}.
}
If all pairs $(m,n)$ on the right-hand side of \eqref{eq:Ej-ub} were due to a fixed divisor of size $>[Q/(Nt)]/\gcd(a_j,b_j)$, then we would conclude that
\[
\#\CE_j \ll t^2N^2 \cdot \frac{\gcd(a_j,b_j)^2}{a_jb_j}.
\]
Actually, Green and Walker \cite{GW} proved recently that this bound is true, even without the presence of a universal divisor. So it makes sense to consider the quantity $\#\CE_j \cdot a_jb_j/\gcd(a_j,b_j)^2$ as a qualitative measure of $G_j$. As a matter of fact, since we are weighing $v$ with $\phi(v)/v$, and we have $\phi(v)/v\le \phi(a_j)/a_j$ whenever $a_j|v$, we may even consider
\[
\lambda_j:= \frac{a_jb_j}{\gcd(a_j,b_j)^2} \cdot \frac{a_jb_j}{\phi(a_j)\phi(b_j)}  \cdot \mu(\CE_j) .
\]

Let us see a different argument for why this quantity might be a good choice, by studying the effect of each prime $p\in\{p_1,\dots,p_j\}$ to the parameters $Q/a_j$, $Q/b_j$ and $[Q/(Nt)]/\gcd(a_j,b_j)$ that control the size of $m$,  $n$, and $\gcd(m,n)$, respectively, in \eqref{eq:Ej-ub}:

\begin{itemize}
	\setlength\itemsep{0.5em}
	\item 
	\textit{Case 1: $p|a_j$ and $p|b_j$.} Then $p$ reduces the upper bounds on the size of both $m$ and $n$ by a factor $1/p$. On the other hand, it also reduces the lower bound on their GCD (that affects both $m$ and $n$) by $1/p$. Hence, we are in a balanced situation.
	
	\item 
	\textit{Case 2: $p\nmid a_j$ and $p\nmid b_j$.} In this case, $p$ affects no parameters.
	
	\item 
	\textit{Case 3: $p| a_j$ and $p\nmid b_j$.} Then $p$ reduces the upper bound on $m$ by a factor $1/p$, but it does not affect the bound on $n$ nor on $\gcd(m,n)$. This is an advantageous situation, gaining us a factor of $p$ compared to what we had. Accordingly, $\lambda_j$ is multiplied by $p$ in this case. This gain allows us to afford a big loss of vertices when falling in this ``asymmetric'' case (a proportion of $1-O(1/p)$).
	
	\item 
	\textit{Case 4: $p\nmid a_j$ and $p|b_j$.} Then we gain a factor of $p$ as in the previous case.
\end{itemize}

Iteratively increasing $\lambda_j$ would be adequate for showing \eqref{eq:DS bilinear 2}, by mimicking the Erd\H os--Vaaler argument from \S\ref{sec:EV}. Unfortunately it is not possible to guarantee that $\lambda_j$ increases at each stage because it is not sensitive enough to the edge density, and so this proposal also fails. However, we will show that (a small variation of ) the hybrid quantity
\eq{\label{eq:quality}
q_j:=\delta_j^9 \lambda_j ,
}
can be made to increase at each step, while keeping track of the sizes of the vertex sets. We call $q_j$ the \emph{quality} of $G_j$. 

Let us now discuss how we might carry out the ``quality increment'' strategy. Given $\CV_j$ and $p_{j+1}$, we wish to set $\CV_{j+1}=\CV_j^{(k)}$ and $\CW_{j+1}=\CW_j^{(\ell)}$ for some $k,\ell\in\{0,1\}$. Let us call $G_j^{(k,\ell)}$ each of the four potential choices for $G_{j+1}$. For their quality $q_j^{(k,\ell)}$, we have:
\[
\begin{array}{ll}
\ds\frac{q_j^{(1,1)}}{q_j} = \Big(\frac{\delta^{(1,1)}}{\delta_j}\Big)^{10} \alpha \beta \Big(1-\frac{1}{p}\Big)^{-2} ,

&\ds \quad \frac{q_j^{(1,0)}}{q_j} = \Big(\frac{\delta_j^{(1,0)}}{\delta_j}\Big)^{10}\alpha (1-\beta) p \Big(1-\frac{1}{p}\Big)^{-1},

\smallskip 

\\

\ds \frac{q_j^{(0,1)}}{q_j} = \Big(\frac{\delta_j^{(0,1)}}{\delta_j}\Big)^{10} (1-\alpha) \beta p \Big(1-\frac{1}{p}\Big)^{-1}  ,
&\ds\quad  \frac{q_j^{(0,0)}}{q_j} = \Big(\frac{\delta_j^{(0,0)}}{\delta_j}\Big)^{10} (1-\alpha)(1-\beta)  ,
\end{array}
\]
where $\delta_j^{(k,\ell)}$ is the edge density of $G^{(k,\ell)}_j$, $\alpha=\mu(\CV_j^{(1)})/\mu(\CV_j)$ is the proportion of vertices in $\CV_j$ that are divisible by $p_{j+1}$, and similarly $\beta=\mu(\CW_j^{(1)})/\mu(\CW_j)$. In addition, we have
\[
\delta_j^{(1,1)}\alpha\beta+\delta_j^{(1,0)}\alpha(1-\beta)+\delta_j^{(0,1)} (1-\alpha)\beta+\delta_j^{(0,0)}(1-\alpha)(1-\beta) = \delta_j,
\]
so that if one of the $\delta_j^{(k,\ell)}$'s is smaller than $\delta$, some other must be larger. Such an unbalanced situation is advantageous, so let us assume that $\delta_j^{(k,\ell)}\sim \delta_j$ for all $k,\ell$. 

Notice that we have an extra factor $p$ in the asymmetric cases $(0,1)$ and $(1,0)$. We can then easily obtain a quality increment in one of these two cases, unless $\alpha,\beta\ll 1/p$, or if $\alpha,\beta\ge 1-O(1/p)$. It turns out that the former case can be treated with a trick. So, the real difficulty is to obtain a quality increment when $\alpha,\beta$ are both close to 1. As a matter of fact, the critical case is when $\alpha,\beta\sim 1-1/p$. Indeed, we then have $q_j^{(k,\ell)}\sim1$ in all four cases, so we maintain a constant quality no matter which subgraph we choose to focus on. 

It is important to remark here that the factor $(1-1/p)^{-2}$ in the case $(k,\ell)=(1,1)$ is essential (the factors $(1-1/p)^{-1}$ in the asymmetric cases are less important as it turns out). Without this extra factor, we would not have been able to guarantee that the quality stays at least as big as $q_j$. Crucially, this factor originates from the weights $\phi(v)/v$ of the vertices that are naturally built in the Duffin--Schaeffer conjecture and that dampen down contributions from integers with too many prime divisors.

We conclude this discussion by going back to the Model Problem. We mentioned in \S\ref{sec:EV} that this problem is false. The reason is a counterexample due to Sam Chow, a square-free version of which is given by $\CS=\{P/j : j|P,\ x/2\le j\le x\}$ with $P=\prod_{p\le x}p$. Indeed, all pairwise GCDs here are $\ge P/x^2$, but there is no fixed integer of size $\gg P/x^2$ dividing a positive proportion of elements of this set. In addition, note that if $p\le x/\log x$, then the proportion of $\CS$ divisible by $p$ is $\sim1-1/p$, just like in the critical case discussed above.

\subsection{The quality increment argument}

We now discuss the formal details of our iterative algorithm. We must first set up some notation. 
We say that $G=(\CV,\CW,\CE,\CP,a,b)$ is a \emph{square-free GCD graph} if:
\begin{itemize}[itemsep=0pt]
	\item $\CV$ and $\CW$ are non-empty, finite sets of square-free integers;
	\item $(\CV,\CW,\CE)$ is a bipartite graph, meaning that $\CE\subseteq\CV\times\CW$;
	\item $\CP$ is a finite set of primes, and $a$ and $b$ divide $\prod_{p\in\CP}p$;
	\item $a|v$ and $b|w$ for all $(v,w)\in\CV\times\CW$;
	\item if $(v,w)\in\CE$ and $p\in\CP$, then $p|\gcd(v,w)$ precisely when $p|\gcd(a,b)$.
\end{itemize}
We shall refer to $(\CP,a,b)$ as the \emph{multiplicative data} of $G$. Furthermore, we defined the \emph{edge density} of $G$ by $\delta(G):=\frac{\mu(\CE)}{\mu(\CV)\mu(\CW)}$, and its \emph{quality} by
\[
q(G):= \delta(G)^9\cdot  \mu(\CE)\cdot  \frac{ab}{\gcd(a,b)^2} \cdot \frac{ab}{\phi(ab)} \cdot \prod_{p\in\CP} \Big(1-\frac{1}{p^{3/2}}\Big)^{-10} .
\]
In addition, we define the set of ``remaining large primes'' of $G$ by 
\[
\CR(G):=\{p\notin\CP: p>5^{100},\ p|\gcd(v,w)\ \text{for some}\ (v,w)\in\CE\} .
\]
Finally, if $G'=(\CV',\CW',\CE',\CP',a',b')$ is another square-free GCD graph, we call it a \emph{subgraph} of $G$ if $\CV'\subseteq\CV$, $\CW'\subseteq\CW$, $\CE'\subseteq \CE$, $\CP'\supseteq\CP$, $\prod_{p|a',\,p\in\CP}p=a$, $\prod_{p|b',\,p\in\CP}p=b$.

\begin{lem}[The quality increment argument]\label{lem:quality-increment} 
	Let $G=(\CV,\CW,\CE,\CP,a,b)$  be a square-free GCD graph, let $p\in \CR(G)$, and let $\alpha=\frac{\mu(\{v\in\CV:p|v\})}{\mu(\CV)}$ and $\beta=\frac{\mu(\{w\in\CW:p|w\})}{\mu(\CW)}$.
\begin{enumerate}
	\item If $\min\{\alpha,\beta\}\le 1-5^{12}/p$, then there is a subgraph $G'$ of $G$ with multiplicative data $(\CP\cup\{p\},ap^k,bp^\ell)$ for some $k,\ell\in\{0,1\}$ satisfying $\delta(G')^mq(G') \ge 2^{k\neq\ell} \delta(G)^mq(G)$ for $m\in\{0,1\}$.
	\item If $\min\{\alpha,\beta\}>1-5^{12}/p$, then there is a subgraph $G'$ of $G$ with set of primes $\CP\cup\{p\}$ and with quality 
	$q(G') \ge q(G)$.
\end{enumerate}	
\end{lem}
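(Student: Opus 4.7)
My plan is to construct four candidate subgraphs $G^{(k,\ell)}$ for $(k,\ell)\in\{0,1\}^2$ by splitting $\CV$ into $\CV^{(1)}=\{v\in\CV:p\mid v\}$ and $\CV^{(0)}=\CV\setminus\CV^{(1)}$, analogously $\CW^{(\ell)}$, and setting $\CE^{(k,\ell)}=\CE\cap(\CV^{(k)}\times\CW^{(\ell)})$ and $G^{(k,\ell)}=(\CV^{(k)},\CW^{(\ell)},\CE^{(k,\ell)},\CP\cup\{p\},ap^k,bp^\ell)$. First I would verify that each $G^{(k,\ell)}$ with non-empty vertex sets is a square-free GCD subgraph of $G$; the only non-trivial check is the gcd axiom for the newly adjoined prime $p$, which holds because $p\mid\gcd(v,w)$ for $(v,w)\in\CE^{(k,\ell)}$ precisely when $(k,\ell)=(1,1)$, matching $p\mid\gcd(ap^k,bp^\ell)=\gcd(a,b)\cdot p^{\min(k,\ell)}$.

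Second, I would compute the quality ratio directly. Writing $\delta^{(k,\ell)}$ for the edge density of $G^{(k,\ell)}$ and $\alpha_1=\alpha,\alpha_0=1-\alpha,\beta_1=\beta,\beta_0=1-\beta$, and tracking how each factor of $q$ transforms (the quotient $ab/\gcd(a,b)^2$ acquires $p^{|k-\ell|}$; the quotient $ab/\phi(ab)$ acquires $(1-1/p)^{-1}$ exactly when $k+\ell\ge 1$; the product over $\CP$ acquires $(1-1/p^{3/2})^{-10}$) together with $\mu(\CE^{(k,\ell)})=\delta^{(k,\ell)}\alpha_k\beta_\ell\mu(\CV)\mu(\CW)$, I arrive at
\[
\frac{q(G^{(k,\ell)})}{q(G)}=\Bigl(\frac{\delta^{(k,\ell)}}{\delta(G)}\Bigr)^{10}\alpha_k\beta_\ell\,p^{|k-\ell|}\Bigl(1-\frac{1}{p}\Bigr)^{-1_{k+\ell\ge1}}\Bigl(1-\frac{1}{p^{3/2}}\Bigr)^{-10}.
\]
The averaging identity $\sum_{k,\ell}\alpha_k\beta_\ell\,\delta^{(k,\ell)}=\delta(G)$ then forces at least one $\delta^{(k,\ell)}\ge\delta(G)$ and lets me control the deviations of all four densities from $\delta(G)$.

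For part (a), WLOG $1-\alpha\ge5^{12}/p$, and I would aim for the asymmetric case $(k,\ell)=(0,1)$, whose ratio carries the huge bonus factor $(1-\alpha)\beta\,p\ge5^{12}\beta$. Even the stronger $m=1$ version of the inequality then gives quality ratio at least $(\delta^{(0,1)}/\delta)^{11}\cdot 5^{12}\beta(1-1/p)^{-1}(1-1/p^{3/2})^{-10}$, which beats $2$ with vast room whenever $\beta$ and $\delta^{(0,1)}/\delta$ are not both tiny. If $\beta<1/2$, the symmetric case $(0,0)$ has weight $(1-\alpha)(1-\beta)\ge 5^{12}/(2p)$ bounded away from zero, and the pigeonhole on the four densities forces one of the remaining three $\delta^{(k,\ell)}$ to compensate and deliver the required ratio. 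A finite case split on whether $\beta\lessgtr 1/2$ and whether $\delta^{(k,\ell)}\lessgtr\delta(G)$ covers all sub-cases and yields both $m=0$ and $m=1$ simultaneously, since the binding inequality is only the $m=1$ one when $\delta^{(k,\ell)}<\delta(G)$, and the $5^{12}$ slack easily absorbs a single extra factor of $\delta^{(k,\ell)}/\delta(G)$.

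Part (b) is the subtler case. With $\alpha,\beta>1-5^{12}/p$ and $p>5^{100}$, the weights $\alpha_k\beta_\ell$ for $(k,\ell)\ne(1,1)$ are $O(1/p)$ whereas $\alpha\beta\ge 1-O(1/p)$. I would first try $(k,\ell)=(1,1)$: if $\delta^{(1,1)}\ge\delta(G)$, the small loss $1-\alpha\beta$ is overpowered by the tenth-power amplification combined with the cushion $(1-1/p)^{-1}(1-1/p^{3/2})^{-10}>1$, and $q(G^{(1,1)})\ge q(G)$. Otherwise $\delta^{(1,1)}<\delta(G)$, and the averaging identity forces the three asymmetric quadrants to carry, on weighted average, $\delta^{(k,\ell)}$ exceeding $\delta(G)$ by a factor that counteracts their $O(1/p)$ weight; plugging this into the $(1,0)$ or $(0,1)$ ratio, the bonus factor $p$ then produces the quality gain. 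The hardest point is the boundary regime where $\alpha$ and $\beta$ sit at the threshold $1-5^{12}/p$ and all four $\delta^{(k,\ell)}$ cluster near $\delta(G)$: here every candidate ratio lies near $1$, and the quality increment is squeezed out of the strictly positive term $(1-1/p^{3/2})^{-10}-1$, a slack which exists precisely because of the $\phi(v)/v$ weights baked into $\mu$. This Duffin--Schaeffer-specific feature, and its careful calibration against the loss $(1-1/p)^{-1}$ arising in $q(G^{(1,1)})/q(G)$, is exactly what lets the quality algorithm close.
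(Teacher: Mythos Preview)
Your setup and quality-ratio computation are correct, and your strategy for part (b) is in the right spirit. However, there is a genuine gap in part (a): the four quadrant subgraphs $G^{(k,\ell)}$ are \emph{not} always sufficient. Consider $\alpha=\beta\approx 1/p$ with edge proportions (writing $\delta_{k,\ell}=\mu(\CE^{(k,\ell)})/\mu(\CE)$) given by $\delta_{1,0}=\delta_{0,1}\approx 1/p$, $\delta_{0,0}\approx 1-2/p$, and $\delta_{1,1}$ negligible but positive. Then the quality ratio for $G^{(0,0)}$ is $(1-2/p)^{10}(1-1/p)^{-18}(1-p^{-3/2})^{-10}\approx e^{-2/p}<1$; the asymmetric ratios are $(1/p)^{10}\big((1/p)(1-1/p)\big)^{-9}p\cdot(\text{cushion})\approx 1+10/p$, well short of the required $2$; and the $G^{(1,1)}$ ratio is negligible. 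Your case split does not cover this: the phrase ``$(1-\alpha)(1-\beta)\ge 5^{12}/(2p)$ bounded away from zero'' is simply false as $p\to\infty$, and the ``$5^{12}$ slack'' you invoke evaporates once $\alpha,\beta$ are themselves $O(1/p)$, since then the bonus factor $(1-\alpha)\beta\, p$ in the asymmetric quadrant is only $O(1)$ rather than $\gg 5^{12}$.

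The paper handles exactly this regime (both $\alpha,\beta\le 5^{12}/p$, and neither symmetric quadrant delivering a win) with a \emph{fifth} candidate subgraph. In that situation one knows $\delta_{1,1}<(\alpha\beta)^{9/10}\ll p^{-9/5}$, and one takes
\[
G'=\big(\CV,\ \CW,\ \CE\setminus\CE^{(1,1)},\ \CP\cup\{p\},\ a,\ b\big),
\]
retaining both full vertex sets and merely deleting the $(1,1)$-edges. This is still a legitimate GCD subgraph with multiplicative data corresponding to $k=\ell=0$ (no remaining edge has $p\mid\gcd(v,w)$), and its ratio is $(1-\delta_{1,1})^{10+m}(1-p^{-3/2})^{-10}\ge 1$ precisely because $\delta_{1,1}$ is so tiny compared with $p^{-3/2}$. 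This edge-deletion trick is the idea your proposal is missing.
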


\begin{proof} Each $k,\ell\in\{0,1\}$ define a subgraph of $G$ with multiplicative data $(\CP\cup\{p\},ap^k,bp^\ell)$. Indeed, we merely need to focus on the vertex subsets $\CV_k=\{v\in \CV: p^k\|v\}$ and $\CW_\ell=\{w\in\CW:p^\ell\|w\}$. This new GCD graph is formally given by the sextuple $G_{k,\ell}=(\CV_k,\CW_\ell,\CE_{k,\ell},\CP\cup\{p\},ap^k,bp^\ell)$, where $\CE_{k,\ell}=\CE\cap(\CV_k\times\CW_\ell)$. Let $\delta_{k,\ell}=\frac{\mu(\CE_{k,\ell})}{\mu(\CE)}$, $\alpha_k=\frac{\mu(\CV_k)}{\mu(\CV)}$ and $\beta_\ell=\frac{\mu(\CW_\ell)}{\mu(\CW)}$, so that $\alpha_0=1-\alpha$, $\alpha_1=\alpha$, $\beta_0=1-\beta$ and $\beta_1=\beta$. We then have
\eq{\label{eq:quality-ratio}
	\frac{\delta(G_{k,\ell})^mq(G_{k,\ell})}{\delta(G)^mq(G)} =  
	\frac{\delta_{k,\ell}^{10+m} (\alpha_k\beta_\ell)^{-9-m}p^{1_{k\neq\ell}}}{ (1-1/p)^{k+\ell}(1-p^{-3/2})^{10}},
}

\smallskip

(a) We claim that there exist choices of $k,\ell\in\{0,1\}$ such that
\eq{\label{eq:quality-edge density}
\delta_{k,\ell} \ge 
		\begin{cases} 
			(\alpha_k\beta_k)^{9/10} &\text{if}\ k=\ell,\\
			\ds \frac{\alpha(1-\beta)+(1-\alpha)\beta}{5}	&\text{if}\ k\neq\ell.
		\end{cases}
}
To prove this claim, it suffices to show that
\eq{\label{eq:inequality}
(\alpha\beta)^{9/10}+((1-\alpha)(1-\beta))^{9/10} +\frac{2}{5}\cdot [\alpha(1-\beta)+(1-\alpha)\beta]	 \le 1.
}
Let $u=\max\{\alpha\beta,(1-\alpha)(1-\beta)\}$. Then
\[
(\alpha\beta)^{9/10}+((1-\alpha)(1-\beta))^{9/10} \le u^{2/5} [(\alpha\beta)^{1/2}+((1-\alpha)(1-\beta))^{1/2}]\le u^{2/5} 
\]
by the Cauchy--Schwarz inequality. On the other hand, we have
\[
\alpha(1-\beta)+(1-\alpha)\beta = 1-\alpha\beta-(1-\alpha)(1-\beta)\le 1-u.
\]
In conclusion, the left-hand side of \eqref{eq:inequality} is $\le u^{2/5}+2(1-u)/5\le 1$, as needed.

Now, if \eqref{eq:quality-edge density} is true with $k=\ell$, part (a) of the lemma follows immediately by \eqref{eq:quality-ratio} upon taking $G'=G_{k,k}$. Assume then that \eqref{eq:quality-edge density} fails when $k=\ell$. We separate  two cases.

\smallskip 

\noindent\emph{Case 1: $\max\{\alpha,\beta\}>5^{12}/p$.} We know that \eqref{eq:quality-edge density} holds for some choice of $k\neq\ell$. Suppose that $k=1$ and $\ell=0$ for the sake of concreteness; the other case is similar. Then, \eqref{eq:quality-ratio} implies
\[
\frac{\delta(G_{1,0})^mq(G_{1,0})}{\delta(G)^mq(G)} \ge \Big(\frac{\alpha(1-\beta)+\beta(1-\alpha)}{5}\Big)^{10+m} \frac{p}{(\alpha(1-\beta))^{9+m}}
\ge \frac{\alpha(1-\beta)+\beta(1-\alpha)}{5^{11}} p
\]
for $m\le1$. The proof is complete by taking $G'=G_{1,0}$, unless $\alpha(1-\beta)+\beta(1-\alpha)<2\cdot 5^{11}/p$. In this case, we claim that either $\max\{\alpha,\beta\}<5^{12}/p$ or $\min\{\alpha,\beta\}>1-5^{12}/p$ (both of which we have assumed are false). By symmetry, we may assume $\alpha\le 1/2$. Then $\beta/2\le \beta(1-\alpha)< 2\cdot 5^{11}/p$, as needed. In particular, $\beta\le 1/2$ (because $p>5^{100}$), and thus $\alpha/2\le \alpha(1-\beta)<2\cdot 5^{11}/p$. We have thus reached a contradiction. This proves the lemma in this case.

\smallskip

\noindent\emph{Case 2: $\max\{\alpha,\beta\}\le 5^{12}/p$.} We must then have $\delta_{1,1}\le (\alpha\beta)^{9/10}\le 5^{22} p^{-9/5}$. Let us now define the GCD subgraph $G'=(\CV,\CW,\CE',\CP\cup\{p\},a,b)$, where $\CE'=\CE\setminus(\CV_1\times\CW_1)$. Notice that we trivially have $a|v$ and $b|w$. In addition, since we have removed all edges $(v,w)$ where $p$ divides both $v$ and $w$, we must have that $p\nmid \gcd(v,w)$ whenever $(v,w)\in\CE'$. So, indeed, we see that $G'$ is a GCD subgraph of $G$. Moreover, 
\[
\frac{\delta(G')^mq(G')}{\delta(G)^mq(G)} = \Big(\frac{\mu(\CE')}{\mu(\CE)}\Big)^{10+m}  (1-p^{-3/2})^{-10}
= (1-\delta_{1,1})^{10+m} (1-p^{-3/2})^{-10}\ge 1
\]
for $m\le1$, because $\delta_{1,1}\le 5^{22}p^{-9/5}$ and $p>5^{100}$. This proves the lemma in this case too.

\smallskip

(b) Let $c=(1-p^{-3/2})^{-1}$. Using \eqref{eq:quality-ratio}, we get a quality increment by letting $G'=G_{k,\ell}$ if one of the following inequalities holds:
\eq{\label{eq:quality-increment case 2}
\begin{array}{ll}
c\delta_{1,1}\ge (\alpha\beta)^{9/10}(1-1/p)^{2/10}; \quad&
c\delta_{0,0}\ge ((1-\alpha)(1-\beta))^{9/10};\\
c\delta_{1,0}\ge (\alpha(1-\beta))^{9/10}p^{-1/10};\quad &
c\delta_{0,1}\ge ((1-\alpha)\beta)^{9/10}p^{-1/10}.
\end{array}
}
Let $\alpha=1-A/p$ and $\beta=1-B/p$ with $A,B\in[0,5^{12}]$. It suffices to show that
\eq{\label{eq:quality-increment e2}
c\ge\Big(1-\frac{A}{p}\Big)^{\frac{9}{10}}\Big(1-\frac{B}{p}\Big)^{\frac{9}{10}}\Big(1-\frac{1}{p}\Big)^{\frac{2}{10}} + \frac{(AB)^{\frac{9}{10}}}{p^{9/5}} + \frac{(1-\frac{A}{p})^{\frac{9}{10}}B^{\frac{9}{10}}+A^{\frac{9}{10}}(1-\frac{B}{p})^{\frac{9}{10}}}{p} .
}
Indeed, the right-hand side of  \eqref{eq:quality-increment e2}  is
\als{
&\le \exp\Big(-\frac{0.9 A+0.9B+0.2}{p}\Big) + \frac{5^{22}}{p^{9/5}} + \frac{A^{9/10}+B^{9/10}}{p} \\
&\le 1-\frac{0.A+0.9B+0.2}{p}+\frac{(0.9A+0.9B+0.2)^2}{2p^2}+ \frac{5^{22}}{p^{9/5}} + \frac{A^{9/10}+B^{9/10}}{p} ,
}
where we used the inequalities $0\le 1-x\le e^{-x}\le 1-x+x^2/2$, valid for all $x\in[0,1]$.
By the inequality of arithmetic and geometric means, we have $0.9A+0.1\ge A^{9/10}$ and $0.9B+0.1\ge B^{9/10}$. Hence, the right-hand side of  \eqref{eq:quality-increment e2}  is
\[
\le 1+\frac{(0.9A+0.9B+0.2)^2}{2p^2}+ \frac{5^{11}}{p^{9/5}} \le 1+\frac{5^{25}}{p^2}+ \frac{5^{22}}{p^{9/5}} \le 1+ \frac{1}{p^{3/2}}\le c
\]
for $p>5^{100}$. This completes the proof of the part (b) of the lemma.
\end{proof}


\subsection{Proof of Theorem \ref{thm:DS-special-case}}
	
Let $Q$, $N$, $\CS$ and $\CB_t$ with $t\ge t_{j_0+1}$ be as in \S\ref{sec:EV}. We want to prove \eqref{eq:DS bilinear 2}. We may assume that $\mu(\CB_t)\ge \mu(\CS)^2/t$; otherwise, \eqref{eq:DS bilinear 2} is trivially true.

Consider the GCD graph $G_0:=(\CS,\CS,\CB_t,\emptyset, \emptyset, \emptyset)$, and note that $\delta(G_0)\ge1/t$. We repeatedly apply part (a) of Lemma \ref{lem:quality-increment} to create a sequence of distinct primes $p_1,p_2,\dots$ and of square-free GCD graphs $G_j=(\CV_j,\CW_j,\CE_j,\{p_1,\dots,p_j\},a_j,b_j)$, $j=1,2,\dots$, with $G_j$ a subgraph of $G_{j-1}$. Assuming we have applied Lemma \ref{lem:quality-increment} (a) $j$ times, we may apply it once more if there is $p\in\CR(G_j)$ dividing a proportion $\le 1-5^{12}/p$ of $\CV_j$ and $\CW_j$.

Naturally, the above process will terminate after a finite time, say after $J_1$ steps and we will arrive at a GCD graph $G_{J_1}$ such that if $p\in\CR(G_{J_1})$, then $p$ divides a proportion $>1-5^{12}/p$ of the vertex sets $\CV_{J_1}$ and $\CW_{J_1}$.  In addition, the sequence of GCD graphs produced is such that $\delta(G_j)^mq(G_j)\ge 2^{1_{j\in\CD}} \delta(G_{j-1})^mq(G^{(j-1)})$ for $m\in\{0,1\}$, where 
\[
\CD=\{j\le J_1: p_j\ \text{divides}\ a_{J_1}b_{J_1}/\gcd(a_{J_1},b_{J_1})^2\} .
\]
In particular,
\eq{\label{eq:total-quality-increment after stage 1}
\delta(G_{J_1})^mq(G_{J_1}) \ge 2^{\#\CD} \delta(G_0)^mq(G_0) \quad\text{for}\ m\in\{0,1\}.
}
To proceed, we must separate two cases.

\smallskip

\noindent\textit{Case 1: $q(G_{J_1})\ge t^{30} q(G_0)$.} We apply repeatedly Lemma \ref{lem:quality-increment} (either part (a) or (b), according to whether the condition $\min\{\alpha,\beta\}\le 1-5^{12}/p$ holds or fails) to create a sequence of primes $p_{J_1+1},p_{J_1+2},\dots$ that are distinct from each other and from $p_1,\dots,p_{J_1}$, and of square-free GCD graphs $G_j=(\CV_j,\CW_j,\CE_j,\{p_1,\dots,p_j\},a_j,b_j)$, $j=J_1+j,J_1+2,\dots$, with $G_j$ a subgraph of $G_{j-1}$.
As before, this process will terminate, say after $J_2-J_1$ steps, and we will arrive at a GCD graph $G_{J_2}$ with  $\CR(G_{J_2})=\emptyset$. By construction, we have
\[
q(G_{J_2}) \ge q(G_{J_2-1}) \ge \cdots \ge q(G_{J_1})\ge t^{30} q(G_0). 
\]
In addition, we have 
\eq{\label{eq:quality at starting graph}
q(G_0) = \frac{\mu(\CB_t)^{10}}{\mu(\CS)^{18}} \ge \frac{\mu(\CB_t)^{10}}{N^{18}}
}
by \eqref{eq:DS-total weight}. (In particular, note that $q(G_0)>0$, so $q(G_{J_2})>0$ and thus $\CE_{J_2}\neq\emptyset$.) On the other hand, if we let $a=a_{J_2}$ and $b=b_{J_2}$, then $\gcd(v,w)|\gcd(a,b)P$ with $P=\prod_{p\le 5^{100}}p$ for all $(v,w)\in\CE_{J_2}$. In particular, $\gcd(a,b)>Q/(PNt)$. Moreover,
\[
\mu(\CE_{J_2}) \le \mathop{\sum\sum}_{m\le 2Q/a,\ n\le 2Q/b}\frac{\phi(am)}{am} \cdot \frac{\phi(bn)}{bn} \le \frac{\phi(a)\phi(b)}{ab}\cdot \frac{4Q^2}{ab} .
\]
Since $\delta(G_{J_2})\le1$ and $\prod_p(1-1/p^{3/2})^{-10}<\infty$, we then have
\eq{	\label{eq:quality at end graph}
q(G_{J_2}) 
	\ll  \mu(\CE_{J_2}) \frac{ab}{\gcd(a,b)^2} \frac{ab}{\phi(a)\phi(b)} \ll t^2N^2 .
}
Recalling that $q(G_{J_2})\ge t^{30}q(G_0)$, relations \eqref{eq:quality at starting graph} and \eqref{eq:quality at end graph} complete the proof of \eqref{eq:DS bilinear 2}, and thus of Theorem \ref{thm:DS-special-case} in this case.

\smallskip

\noindent\textit{Case 2: $q(G_{J_1})<t^{30}q(G_0)$.} In this case, we do not have such a big quality gain, so we need to use that $L_t(v,w)>100$ for all $(v,w)\in\CB_t$. But we must be very careful because this condition might be dominated by the prime divisors of the fixed integers $a$ and $b$ we are constructing. Before we proceed, note that \eqref{eq:total-quality-increment after stage 1} implies that
\eq{\label{eq:density lower bound}
\delta(G_{J_1}) \ge \delta(G_0) \cdot \frac{q(G_0)}{q(G_{J_1})} \ge \frac{1}{t} \cdot \frac{1}{t^{30}} =  \frac{1}{t^{31}} .
}

Let $\CR=\CR(J_1)$ and let $p\in\CR$. By the construction of $G_{J_1}$, $p$ divides a proportion $>1-5^{12}/p$ of the vertex sets $\CV_{J_1}$ and $\CW_{J_1}$. Therefore, 
\[
\mu\Big(\big\{(v,w)\in\CE_{J_1}   : p|vw/\gcd(v,w)^2 \big\}\Big) 
	\le \frac{2\cdot 5^{12}}{p} \mu(\CV_{J_1})\mu(\CW_{J_1}) \le  \frac{5^{13}t^{31}}{p} \mu(\CE_{J_1}),
\]
where we used \eqref{eq:density lower bound}. As a consequence, we find
\[
\sum_{(v,w)\in \CE_{J_1}} \frac{\phi(v)\phi(w)}{vw} \sum_{\substack{p>t^{32},\ p\in\CR\\ p|vw/\gcd(v,w)^2}} \frac{1}{p} 
	\le \sum_{p>t^{32}} \frac{5^{13}t^{31} \mu(\CE_{J_1})}{p^2} 	\le \frac{\mu(\CE_{J_1})}{100}.
\]
Hence, if we let 
\[
\CE_{J_1}^{\text{good}}=\Big\{(v,w)\in \CE_{J_1} : \sum_{\substack{p>t^{32},\ p\in\CR \\ p|vw/\gcd(v,w)^2}} \frac{1}{p} \le 1 \Big\},
\]
Markov's inequality implies that $\mu(\CE_{J_1}^{\text{good}})\ge 0.99 \mu(\CE_{J_1})$. We then define the GCD graph $G_0'=(\CV_{J_1},\CW_{J_1},\CE_{J_1}^{\text{good}},\CP,a_{J_1},b_{J_1})$. Note that
\[
g(G_0')=\bigg(\frac{\mu(\CE_{J_1}^{\text{good}})}{\mu(\CE_{J_1})} \bigg)^{10} q(G_{J_1}) \ge \frac{q(G_{J_1})}{2} \ge \frac{q(G_0)}{2} .
\]

Next, we apply repeatedly Lemma \ref{lem:quality-increment} to create a sequence of distinct primes $p_{J_1+1},p_{J_1+2},\dots\in\CR$ and of GCD graphs $G_j'=(\CV'_j,\CW'_j,\CE'_j,\{p_1,\dots,p_{J_1+j}\},a'_j,b'_j)$, $j=1,\dots$, with $G_j'$ a subgraph of $G_{j-1}'$. This process will terminate, say after $K$ steps, and we will arrive at a GCD graph $G'_K$ with $\CR(G'_K)=\emptyset$. By construction, we have
\eq{\label{eq:quality-increment after stage 2(b)}
		q(G'_K) \ge q(G'_{K-1}) \ge \cdots \ge q(G'_0)\ge q(G_0)/2>0.
}
In particular, $\CE_K'\neq\emptyset$. It remains to give an upper bound on $q(G'_K)$. 

Let $a'=a_K'$  and $b=b_K'$, and recall that $P=\prod_{p\le 5^{100}}p$. Then  $\gcd(v,w)|\gcd(a',b')P$ for all $(v,w)\in\CE_K'$. In particular, $\gcd(a',b')>Q/(PNt)$. Moreover, if $(v,w)\in\CE_K'$ and we let $v=a'm$ and $w=b'n$, then
\[
100<L_t(v,w) \le 5+L_{t^{32}}(v,w) \le 6 + \sum_{\substack{p>t^{32},\ p\notin\CR \\ p|vw/\gcd(v,w)^2 }}\frac{1}{p}
		\le 6+ \frac{\#\CD}{t^{32}} + L_{t^{32}}(m,n) ,
\]
where the first inequality is true because $(v,w)\in \CB_t$, the second one because $\sum_{y<p\le y^2}1/p\le1$ for $y\ge t_{j_0}$, the third one because $(v,w)\in \CE_{J_1}^{\text{good} }$, and the fourth one because if $p$ divides $a'b'/\gcd(a',b')^2$ and $p\notin\CR$, then $p\in\CD$. Now, since $2^{\#\CD}\le q(G_{J_1})/q(G_0)\le t^{30}$, we have that $L_{t^{32}}(m,n)>93$. Therefore,
	\[
	\mu(\CE_K') \le \mathop{\sum\sum}_{\substack{m\le 2Q/a',\ n\le 2Q/b' \\ L_{t^{32}}(m,n)>93}} 
	\frac{\phi(a')\phi(b')}{a'b'} \ll \frac{\phi(a')\phi(b')}{a'b'}\cdot \frac{4Q^2}{a'b'} e^{-t^{32}},
	\]
	by arguing as in the proof of \eqref{eq:DS for full density}. We may then insert this inequality into the definition of $q(G_K')$ and conclude that $q(G_K')\ll e^{-t^{32}} t^2N^2 $. Together with \eqref{eq:quality-increment after stage 2(b)} and \eqref{eq:quality at starting graph}, this completes the proof of \eqref{eq:DS bilinear 2}, and thus of Theorem \ref{thm:DS-special-case} in this last case as well.

\section*{Acknowledgments}
The author is grateful to James Maynard for his comments on a preliminary version of the paper, and also to Andrew Granville for pointing out some inaccuracies in the published version.

\section*{Funding}

The author is supported by the \emph{Courtois Chair II in fundamental research} of the Universit\'e de Montr\'eal, the \emph{Natural Sciences and Engineering Research Council of Canada} (Discovery Grant  2018-05699), and the \emph{Fonds de recherche du Qu\'ebec - Nature et technologies} (projets de recherche en \'equipe 256442 and 300951).


\end{document}